\newtheorem{thm}{Theorem}[section]
\newtheorem{cor}[thm]{Corollary}
\newtheorem{lem}[thm]{Lemma}
\newtheorem*{lem*}{Lemma}
\newtheorem{defi}[thm]{Definition}
\newtheorem{prop}[thm]{Proposition}
\newtheorem{ej}[thm]{Example}
\newtheorem*{ej*}{Example}
\newtheorem*{thm*}{Theorem}
\newtheorem*{remark}{Remark}
\date{}
\title{Bounding the Local Dimension of the Convolution of measures}
\author{Kevin G. Hare}
\address{Department of Pure Mathematics \\
          University of Waterloo \\
          Waterloo, Canada}
\email{kghare@uwaterloo.ca}
\thanks{Research of K. G. Hare was supported, in part, by NSERC Grant 2019-03930}
\author{Joaquin G. Prandi}
\address{Department of Pure Mathematics \\
          University of Waterloo \\
          Waterloo, Canada}
\email{jgprandi@uwaterloo.ca}
\thanks{Research of J. G. Prandi was supported, in part, by NSERC Grant 2019-03930}
\def\R{\mathbb{R}}
\def\N{\mathbb{N}}
\def\Z{\mathbb{Z}}
\def\dim{\operatorname{dim}}
\def\supp{\operatorname{supp}}
\def\inte{\operatorname{int}}
\def\ldim{\underline{\operatorname{dim}}_{loc}}
\def\udim{\overline{\operatorname{dim}}_{loc}}
\newcounter{relctr} 
\everydisplay\expandafter{\the\everydisplay\setcounter{relctr}{0}} 
\begin{document}

\begin{abstract}
    We study the local dimension of the convolution of two measures. We give conditions for bounding the local dimension of the convolution on the basis of the local dimension of one of them. Moreover, we give a formula for the local dimension of some special points in the support of the convolution.
\end{abstract}
\maketitle
\section{Introduction}\label{intro}

The study of local dimensions has proven to be a rich field. The local dimension of a finite measure $\mu$ at a point $x$ in its support is defined as
\[\dim_{loc}\mu(x)=\lim_{r\to0}\frac{\log(\mu(B(x,r)))}{\log(r)},\]
if the limit exists. The local dimensions represent the amount of mass there is around a point in the support. The multifractal formalism (see \cite{falconer2})  provides a relation between the set of possible local dimensions of a measure and the Hausdorff dimension of the set of points in the support of a measure that achieves such dimension. 

Self-similar measures have particularly nice properties with respect to the multifractal formalism. If $\{S_1,\dots,S_n\}$ is an Iterated Function System (IFS) with contractions $r_1,\dots, r_n$ and probabilities $p_1,\dots,p_n$, then we associate the self-similar measure $\mu(A)=\sum_ip_i\mu\circ S_i^{-1}(A)$. Define $\beta(q)$ the solution to
\[\sum_ip_i^qr_i^{\beta(q)}=1\]
If the IFS satisfies the Open Set Condition (OSC), then the multifractal formalism concludes
\[\dim_H(\{x:\dim_{loc}\mu(x)=\alpha\})=\inf_q\{\alpha q+\beta(q)\}.\]
When a self-similar measure satisfies the OSC, the set of local dimensions is a closed interval, where all points of the interval are achieved. When the OSC fails, the situation becomes more complicated as first shown in \cite{HU20011}. In \cite{HU20011}, it was shown that the set of possible local dimensions of the Cantor measure convoluted with itself (enough times) has an isolated point. In other words, the set of possible local dimensions is a closed interval union a disjoint point. In \cite{Khare2011}, the results in \cite{HU20011} were extended to a larger family of Cantor-like measures. The convolution of the Cantor measure with itself satisfies the Finite Type Condition. In a sense, the finite type condition tells us that there are finitely many ways for the transformations to interact. For a precise definition, see \cite{hare/hare/rutar}.

In \cite{shmerkin2004modified}, a modified multifractal formalism was developed for a larger class of self-similar measures. In the process, techniques were introduced to calculate local dimensions. In \cite{HareHareMatthews, Hare_Hare_ShingNg_2018, HARE20181653}, a local dimension study was performed for self-similar measures that satisfy certain overlap conditions. In addition, in \cite{Shmerkin2016OnFI} and \cite{a34a4dbd-85aa-30eb-aa98-c3287493f960}, we see a strong relation between the convolutions and the $L^q$-spectrum. 

In \cite{Khare2011}, a general bound for convolutions was given for measures in $\R$, and we express this result in the case of groups. The result of \cite{Khare2011} has proven useful in studying the power of convolutions. In \cite{BruggemanCameron2013Maoc} it was shown that the set of local dimensions of convolution powers of a measure has an isolated point. If the support of a measure after convolution powers becomes an interval, then eventually the set of possible local dimensions of the convolution power will develop an isolated point. In Section \ref{Srealline}, we are able to reproduce this result using our new results.  We recover that the points in the support that could achieve this local dimension are on the boundary of the support. In \cite{BruggemanCameron2013Maoc} the interest was on the power of convolutions; here, we study the local dimension of the convolution of two, possibly different, measures. We are able to obtain the result that the local dimension of the convolutions of any two measures depends on the support of the measures and, in general, can be bounded by the local dimension of one of the measures.

Our main result is the following.
\begin{thm}\label{groups}
    Let $G$ be a group with a compatible metric $d$ that is translation-invariant. Let $\mu$ and $\nu$ be regular measures. Then if $\udim\mu(x)\leq\lambda$ for all $x\in \inte(\supp\mu)$ then $\udim(\mu*\nu)(z)\leq \lambda$ for all $z\in \inte(\supp\mu)\cdot\supp\nu=\{z=xy:x\in\inte(\supp\mu),y\in\supp\nu)\}$.
\end{thm}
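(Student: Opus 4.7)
The plan is to exploit the convolution identity
\[
(\mu*\nu)(B(z,r)) \;=\; \int \mu(B(zy^{-1},r))\,d\nu(y),
\]
which follows from Fubini together with the translation-invariance identity $B(z,r)y^{-1}=B(zy^{-1},r)$. Decompose $z=x_0y_0$ with $x_0\in\inte(\supp\mu)$ and $y_0\in\supp\nu$; then $zy_0^{-1}=x_0$ lies in the open set $\inte(\supp\mu)$, and the continuity of $y\mapsto zy^{-1}$ (guaranteed by translation invariance of $d$) lets me pick an open neighborhood $W$ of $y_0$ with $zy^{-1}\in\inte(\supp\mu)$ for every $y\in W$. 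Since $y_0\in\supp\nu$, this gives $\nu(W)>0$, which is the positive constant around which everything revolves.

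Next I fix $\varepsilon>0$ and, for each $n\in\N$, set
\[
E_n \;=\; \bigl\{\,y\in W : \mu(B(zy^{-1},r))\geq r^{\lambda+\varepsilon}\text{ for every } r\in(0,1/n]\,\bigr\}.
\]
Because $\udim\mu(zy^{-1})\leq\lambda$ for every $y\in W$, each such $y$ lies in $E_n$ once $n$ is large enough; hence $E_n\nearrow W$, and so $\nu(E_n)\to\nu(W)$ by continuity from below of the (finite, regular) measure $\nu$. Choose $N$ with $\nu(E_N)\geq\nu(W)/2$. Then for every $r\in(0,1/N]$ the integral identity above yields
\[
(\mu*\nu)(B(z,r)) \;\geq\; \int_{E_N}\mu(B(zy^{-1},r))\,d\nu(y) \;\geq\; r^{\lambda+\varepsilon}\nu(E_N) \;\geq\; \frac{\nu(W)}{2}\,r^{\lambda+\varepsilon}.
\]
Taking logarithms and dividing by the negative quantity $\log r$ produces
\[
\frac{\log(\mu*\nu)(B(z,r))}{\log r} \;\leq\; (\lambda+\varepsilon) + \frac{\log(\nu(W)/2)}{\log r};
\]
the last summand tends to $0$ as $r\to0$, so $\udim(\mu*\nu)(z)\leq\lambda+\varepsilon$, and letting $\varepsilon\downarrow 0$ delivers the theorem.

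The only delicate point I anticipate is the measurability of the sets $E_n$ under $\nu$. The function $y\mapsto\mu(B(zy^{-1},r))$ is lower semicontinuous (using continuity of $y\mapsto zy^{-1}$ and openness of the balls), hence Borel; the uncountable quantifier over $r\in(0,1/n]$ can then be replaced by a countable one — say, over dyadic $r$ — using the monotonicity of $r\mapsto\mu(B(x,r))$, at the cost of an arbitrarily small multiplicative constant that is harmlessly absorbed into $\varepsilon$. This is a routine Egorov-style reduction from a pointwise bound to an essentially uniform bound on a set of positive $\nu$-measure, and is the only non-trivial ingredient beyond the elementary convolution computation.
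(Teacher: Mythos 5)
Your proposal is correct and follows essentially the same route as the paper's proof: both reduce to the identity $(\mu*\nu)(B(z,r))=\int\mu(B(zy^{-1},r))\,d\nu(y)$, locate a positive-$\nu$-measure set of $y$'s for which $zy^{-1}\in\inte(\supp\mu)$ (you use a neighborhood $W$ of one $y_0$; the paper uses the full set $N_z=\supp\nu\cap(\inte(\supp\mu))^{-1}z$), and then apply continuity from below to the increasing sets where the bound $\mu(B(zy^{-1},r))\ge r^{\lambda+\varepsilon}$ holds uniformly for $r<1/n$ before integrating. Your closing remark on the measurability of $E_n$ addresses a point the paper passes over silently, but it does not change the substance of the argument.
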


The proof of this theorem is given in Section \ref{Sgroups}. We also prove some variations of the main theorem and see an application to the torus. In Section \ref{Srealline} we bring the study to the real line, where we work with measures with bounded support. Examples of why some conditions are important are given in this section. In Section \ref{Sspecailpoints} we will have results for special points on the real line. In Section \ref{COQuestions}, we present some open questions and conclusions. In the remainder of Section \ref{intro}, we recall the basic results and definitions.

Throughout this paper, we assume all measures to be Borel and finite.
\subsection{Basic results and definitions}

We first recall the definition of local dimension of a measure at a point $x$.
\begin{defi}\label{dlocal}
Given a measure $\mu$ on a metric space $X$, we define the \emph{upper local dimension of $\mu $} at a point $x\in \supp \mu$ as
\[\overline{\dim}_{loc}\mu(x)=\limsup_{r\to 0}\frac{\log(\mu(B(x,r)))}{\log(r)},\]
and the \emph{lower local dimension of $\mu$} at $x\in\supp \mu$ \[\underline{\dim}_{loc}\mu(x)=\liminf_{r\to 0}\frac{\log(\mu(B(x,r)))}{\log(r)}.\]
If these two values coincide, we call it the \emph{local dimension of $\mu$} at $x\in\supp \mu$, and write
\[\dim_{loc}\mu(x)=\lim_{r\to 0}\frac{\log(\mu(B(x,r)))}{\log(r)}.\]
\end{defi}

Recall the definition of convolution of two measures.
\begin{defi}\label{conv}
    Let $G$ be a topological group, and let $\mu$ and $\nu$ be regular measures defined on $G$. The \emph{convolution} $\mu*\nu$ is define as follow:
    \begin{align*}
        \mu*\nu(A)&=\int_G\int_G \chi_A(xy)d\mu(x)d\nu(y)\\
        &=\int_G\int_G \chi_{Ay^{-1}}(x)d\mu(x)d\nu(y)\\
        &=\int_G\mu(Ay^{-1})d\nu(y)\\
        &=\int_G\nu(x^{-1}A)d\mu(x).\\
    \end{align*}
\end{defi}
The following is a classic result and a proof can be found in \cite{bartle}.
\begin{lem}\label{lemabart}
    Let $\mu$ be a measure. If $(E_n)$ is a sequence of Borel sets such that $E_n\subset E_{n+1}$, then \[\mu\left(\bigcup_{n=1}^\infty E_n\right)=\lim_{n\to\infty}\mu(E_n).\]
\end{lem}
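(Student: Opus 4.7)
The plan is the classical disjointification argument, turning the increasing union into a disjoint union so that countable additivity can be invoked directly. Concretely, I would set $F_1 = E_1$ and, for $n\geq 2$, $F_n = E_n \setminus E_{n-1}$. Each $F_n$ is Borel since Borel sets form a $\sigma$-algebra (so complements and finite intersections of Borel sets are Borel), and for $m < n$ one has $F_m \subset E_m \subset E_{n-1}$ while $F_n$ is disjoint from $E_{n-1}$, so the family $(F_n)$ is pairwise disjoint.

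The next step is to verify the two set identities $\bigcup_{k=1}^{n} F_k = E_n$ for every $n\geq 1$ and $\bigcup_{n=1}^{\infty} F_n = \bigcup_{n=1}^{\infty} E_n$. The first follows by induction: the base case is $F_1 = E_1$, and the inductive step uses $E_n = E_{n-1} \cup (E_n \setminus E_{n-1}) = \bigcup_{k=1}^{n-1} F_k \cup F_n$. The second identity is then immediate by taking $n \to \infty$ on both sides.

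Now I would apply countable additivity of $\mu$ to the disjoint union and finite additivity to the partial sums:
\[
\mu\!\left(\bigcup_{n=1}^{\infty} E_n\right) = \mu\!\left(\bigcup_{n=1}^{\infty} F_n\right) = \sum_{n=1}^{\infty} \mu(F_n) = \lim_{N \to \infty}\sum_{n=1}^{N}\mu(F_n) = \lim_{N\to\infty}\mu(E_N),
\]
where the last equality again uses finite additivity applied to the disjoint decomposition $E_N = \bigsqcup_{k=1}^N F_k$.

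There is no real obstacle here; the only point that deserves attention is that each $\mu(F_n)$ may be infinite, in which case both sides of the claimed identity equal $+\infty$ (the limit on the right exists in $[0,\infty]$ because $E_n \subset E_{n+1}$ forces $\mu(E_n)$ to be monotone non-decreasing). Everything else is a direct application of the definition of a measure to the disjointification $(F_n)$ of the given increasing sequence.
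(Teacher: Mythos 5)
Your proof is correct and is the standard disjointification argument; the paper itself gives no proof, deferring to Bartle, and this is precisely the argument found there. Note also that the paper assumes all measures are finite, so your (correct) caveat about infinite values is not even needed in this setting.
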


\section{Topological Groups}\label{Sgroups}
The proof of Theorem \ref{groups} is organized as follows: First, we define the subset of $\supp \mu$ and $\supp\nu$ that are of interest $M_z$ and $N_z$, respectively. Then we connect $N_z$ and $M_z$ with translations. Using this connection we are able to obtain a subset of $N_z$ where $\frac{\log(\mu(B(x,r)))}{\log(r)}<\lambda+\varepsilon$ for all $r<r_0$. We then use this to bound the convolution.
\begin{thm*}
    Let $G$ be a group with a compatible metric $d$ that is translation-invariant. Let $\mu$ and $\nu$ be regular measures. Then if $\udim\mu(x)\leq\lambda$ for all $x\in \inte(\supp\mu)$ then $\udim(\mu*\nu)(z)\leq \lambda$ for all $z\in \inte(\supp\mu)\cdot\supp\nu=\{z=xy:x\in\inte(\supp\mu),y\in\supp\nu)\}$
\end{thm*}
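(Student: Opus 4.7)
The plan is to produce, for every $\varepsilon > 0$, a lower bound of the form $(\mu*\nu)(B(z,r)) \geq C\, r^{\lambda+\varepsilon}$ valid for all sufficiently small $r$, with $C>0$ depending only on $\varepsilon$ and $z$. Taking logarithms and dividing by $\log r < 0$, such an estimate immediately gives $\udim(\mu*\nu)(z) \leq \lambda+\varepsilon$, and sending $\varepsilon \to 0$ finishes the proof. The first move is to write $z = x_0 y_0$ with $x_0 \in \inte(\supp\mu)$ and $y_0 \in \supp\nu$ and to exploit the translation invariance of $d$, so that $B(z,r)y^{-1} = B(zy^{-1}, r)$; Definition \ref{conv} then rewrites as
\[
(\mu*\nu)(B(z,r)) = \int_G \mu(B(zy^{-1}, r))\, d\nu(y),
\]
reducing everything to studying $y \mapsto \mu(B(zy^{-1}, r))$ on a convenient subset of $\supp\nu$.

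Next I would introduce the sets $N_z = \{y \in \supp\nu : zy^{-1} \in \inte(\supp\mu)\}$ and $M_z = \{zy^{-1} : y \in N_z\} \subset \inte(\supp\mu)$, providing the translational link mentioned in the roadmap. Since $x_0 \in \inte(\supp\mu)$ one can pick $\delta > 0$ with $B(x_0, \delta) \subset \supp\mu$; by translation invariance $y \in B(y_0,\delta)$ implies $zy^{-1} \in B(x_0, \delta) \subset \inte(\supp\mu)$, so $B(y_0,\delta)\cap \supp\nu \subset N_z$ and hence $\nu(N_z) > 0$. To upgrade the pointwise hypothesis $\udim\mu(x) \leq \lambda$ on $\inte(\supp\mu)$ into a uniform estimate, for each $n \in \N$ I would set
\[
B_n = \{y \in N_z : \mu(B(zy^{-1}, r)) \geq r^{\lambda+\varepsilon}\ \text{for all}\ r \in (0, 1/n)\}.
\]
These form an increasing family of subsets of $N_z$ whose union is $N_z$, because the definition of the upper local dimension applied at each $zy^{-1} \in \inte(\supp\mu)$ supplies the needed threshold. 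Lemma \ref{lemabart} applied to $\nu$ then gives $\nu(B_n) \to \nu(N_z) > 0$, so one can fix $n_0$ with $c := \nu(B_{n_0}) > 0$, and for every $r < 1/n_0$,
\[
(\mu*\nu)(B(z,r)) \geq \int_{B_{n_0}} \mu(B(zy^{-1}, r))\, d\nu(y) \geq c\, r^{\lambda+\varepsilon},
\]
which is exactly the bound postulated at the outset.

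The main obstacle I anticipate is precisely this uniformization: the hypothesis only supplies, for each $x \in \inte(\supp\mu)$, a threshold $r_0(x)$ below which $\mu(B(x,r)) \geq r^{\lambda+\varepsilon}$ holds, and a priori this threshold can degenerate as $x$ ranges over $M_z$. The nested family $\{B_n\}$ together with Lemma \ref{lemabart} is the device that converts this pointwise control into a positive-$\nu$-measure subset of $N_z$ sharing a common threshold. A secondary, more technical concern is the Borel measurability of $B_n$: replacing the quantifier "for all $r \in (0, 1/n)$" by "for all $r \in \mathbb{Q} \cap (0, 1/n)$" realizes $B_n$ as a countable intersection of Borel sets, and the two conditions agree up to an arbitrarily small loss in $\varepsilon$ by the left-continuity of $r \mapsto \mu(B(x,r))$.
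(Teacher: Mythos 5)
Your proposal is correct and follows essentially the same route as the paper: the same set $N_z$ with $\nu(N_z)>0$, the same exhaustion by sets with a uniform threshold $1/n$ combined with Lemma \ref{lemabart}, and the same lower bound on the convolution integral restricted to that subset. Your reformulation of the threshold condition as $\mu(B(zy^{-1},r))\geq r^{\lambda+\varepsilon}$ is equivalent to the paper's logarithmic form, and your measurability remark is a welcome addition the paper leaves implicit.
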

\begin{proof}
     Pick $z\in\inte(\supp\mu)\cdot\supp\nu$. Let
    \[M_z=\{x\in\inte(\supp\mu):\exists y\in\supp\nu:xy=z\},\]
and     \[N_z=\{y\in \supp\nu:\exists x\in\inte(\supp\mu):xy=z\}.\]
Let $\varphi_z(y)=zy^{-1}$ and $\varphi^z(x)=x^{-1}z$  then $\varphi^z(M_z)=N_z$ and $\varphi_z(N_z)=M_z$. Since $z\in\inte(\supp\mu)\cdot\supp\nu$, we have $N_z\neq \emptyset$. Note that $\varphi^z(\inte(\supp\mu))=(\inte(\supp\mu))^{-1}z$ is an open set and $\supp\nu\cap((\inte(\supp\mu))^{-1}z)=N_z$, hence $\nu(N_z)>0$.

Set $\delta:=\nu(N_z)/2$. We will show for all $\varepsilon>0$ there exists a set  $A_{\varepsilon}\subset N_z$ and $r_0$ such that
\begin{enumerate}
    \item $\nu(A_{\varepsilon})\geq\delta>0$.
    \item For all $x\in\varphi_z(A_{\varepsilon})$ and for all $r\leq r_0$ we have $\frac{\log(\mu(B(x,r)))}{\log(r)}<\lambda+\varepsilon$. 
\end{enumerate}
 Define 
 \[A_{\varepsilon,n}=\left\{x\in N_z:\frac{\log(\mu(B(\varphi_z(x),r)))}{\log(r)}\leq \lambda +\varepsilon \text{ for all }r<1/n\right\}.\]
 Then
\[N_z=\bigcup_{n=1}^\infty\left\{x\in N_z:\frac{\log(\mu(B(\varphi_z(x),r)))}{\log(r)}\leq \lambda +\varepsilon \text{ for all }r<1/n\right\}=\bigcup_{n=1}^\infty A_{\varepsilon,n}. \]
As $\nu(N_z)=2\delta$, we see that by Lemma \ref{lemabart} there exists some $N\in \N$ such that $\nu(A_{\varepsilon,N})\geq \delta$. Let $r_0(\varepsilon)=1/N$ and $A_\varepsilon=A_{\varepsilon,N}$. Then we see that $A_\varepsilon$ has properties $1$ and $2$ by the definition of $A_{\varepsilon,N}$.

Using this construction of $A_\varepsilon$ and $r_0$ we prove the result. Fix $\varepsilon>0$, and let $0<r<r_0$. Then,
\begin{align*}
    (\mu*\nu)(B(z,r))&=\iint \chi_{B(z,r)}(xy)d\mu(x)d\nu(y)\\
    &=\int\mu(B(z,r)y^{-1})d\nu(y)\\
    &\geq \int_{N_z}\mu(B(z,r)y^{-1})d\nu(y)\\
    &\geq\int_{A_\varepsilon}\mu(B(z,r)y^{-1})d\nu(y)\\
    &\geq\int_{A_\varepsilon}\inf_{t\in A_\varepsilon}\{ \mu(B(zt^{-1},r))\}d\nu(y)\\
    &=\inf_{t\in A_\varepsilon}\{ \mu(B(zt^{-1},r))\}\int_{A_\varepsilon}d\nu(y)\\
    &\geq\left(\inf_{x\in\varphi_z(A_\varepsilon)}\mu(B(x,r))\right)\delta,
\end{align*}
hence
\[\frac{\log((\mu*\nu)(B(z,r)))}{\log(r)}\leq\sup_{x\in\varphi_z(A_\varepsilon)}\frac{\log(\mu(B(x,r)))}{\log(r)}+\frac{\log(\delta)}{\log(r)}\leq \lambda+\varepsilon+\frac{\log(\delta)}{\log(r)}.\]
Letting $r\to 0$ we can conclude
\[\udim (\mu*\nu)(z)\leq \lambda+\varepsilon\Rightarrow\udim (\mu*\nu)(z)\leq \lambda.\]

\end{proof}
\begin{remark}
    We note that we could ask for translations to be Lipchitz, and with some simple modifications the proof works.
\end{remark}

\begin{thm}\label{generalb}
    Let $\mu$ and $\nu$ be measures defined in a metric group $G$, such that the metric is translation invariant. Then for $z\in \partial\supp\mu+\partial\supp\nu$,  \[\udim(\mu*\nu)(z)\leq \inf_{x_0y_0=z}\udim(\mu\times\nu)(x_0,y_0).\]
\end{thm}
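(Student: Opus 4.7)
The strategy is to relate balls under $\mu * \nu$ to product balls for $\mu \times \nu$ via translation invariance, and then compare the two logarithmically. Fix any decomposition $z = x_0 y_0$ with $x_0 \in \supp\mu$ and $y_0 \in \supp\nu$; by hypothesis at least one such decomposition exists with both factors in the respective boundaries. The triangle inequality, combined with translation invariance of $d$, gives, for $d(x, x_0) < r/2$ and $d(y, y_0) < r/2$,
\[
d(xy, z) \leq d(xy, x_0 y) + d(x_0 y, x_0 y_0) = d(x, x_0) + d(y, y_0) < r.
\]
Writing $m(x, y) = xy$, this yields the set containment $B(x_0, r/2) \times B(y_0, r/2) \subseteq m^{-1}(B(z, r))$.

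Next, using Definition \ref{conv} to rewrite the convolution as a product-measure integral,
\[
(\mu*\nu)(B(z, r)) = (\mu \times \nu)(m^{-1}(B(z, r))) \geq \mu(B(x_0, r/2)) \, \nu(B(y_0, r/2)).
\]
Endowing $G \times G$ with the max metric $\tilde d((x_1, y_1), (x_2, y_2)) = \max\{d(x_1, x_2), d(y_1, y_2)\}$, the right-hand side is exactly the $\tilde d$-ball of radius $r/2$ around $(x_0, y_0)$ measured by $\mu \times \nu$.

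Taking logarithms and dividing by $\log r$ (which flips the inequality for small $r$), and then taking $\limsup$ as $r \to 0$, the substitution $r = 2s$ shows the additive $\log 2$ is absorbed into lower order, so
\[
\udim(\mu * \nu)(z) \leq \udim(\mu \times \nu)(x_0, y_0).
\]
Taking the infimum over all admissible decompositions of $z$ completes the argument.

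There is no serious obstacle: the bound is essentially a soft consequence of translation invariance, and the only modest bookkeeping concerns the choice of metric on $G \times G$, with any bi-Lipschitz-equivalent choice producing the same local dimensions. Notably, the boundary hypothesis on $z$ is not used inside the argument itself; it presumably signals where the bound is of most interest, since for $z$ in the interior of $\supp\mu \cdot \supp\nu$ one already has the sharper control provided by Theorem \ref{groups}.
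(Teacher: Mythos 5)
Your proof is correct and follows essentially the same route as the paper: fix a decomposition $z=x_0y_0$, use translation invariance to get $B(x_0,r/2)\cdot B(y_0,r/2)\subseteq B(z,r)$ and hence $(\mu*\nu)(B(z,r))\geq\mu(B(x_0,r/2))\nu(B(y_0,r/2))$, then pass to logarithms and take the infimum over decompositions. Your added remarks (identifying the right-hand side as a product-metric ball, and noting that the boundary hypothesis is not actually used in the argument) are accurate refinements of what the paper leaves implicit.
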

\begin{proof}
Let $x_0y_0=z$. If $x\in B(x_0,r/2)$ and $y\in B(y_0,r/2)$, then $xy\in B(z,r)$, hence $\mu*\nu(B(z,r))\geq\mu(B(x_0,r/2))\nu(B(y_0,r/2))$. Then 
    \begin{align*}
        \udim(\mu*\nu)(z)&\leq \udim(\mu\times\nu)(x_0,y_0)\\
        \Rightarrow\udim(\mu*\nu)(z)&\leq\inf_{x_0y_0=z} \udim(\mu\times\nu)(x_0,y_0).\\        
    \end{align*}
\end{proof}

\begin{thm}
    Let $\mu$ and $\nu$ be measures in a metric group $G$, such that the metric is translation-invariant. If $\udim\mu(x)\leq \lambda$ for all $x\in S\subset\inte(\supp\mu)$ such that $\mu(G\setminus S)=0$, then $\udim\mu*\nu(z)\leq \lambda$ for all $z\in S\cdot\supp\nu$ such that $\nu(\varphi^z(S))>0$.
\end{thm}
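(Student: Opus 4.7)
The plan is to follow the proof of Theorem~\ref{groups} essentially verbatim, with $S$ playing the role of $\inte(\supp\mu)$ and with positivity of the relevant $\nu$-measure extracted directly from the hypothesis $\nu(\varphi^z(S))>0$ rather than from an openness argument. Fix $z\in S\cdot\supp\nu$ with $\nu(\varphi^z(S))>0$, and set $N_z:=\varphi^z(S)=S^{-1}z$. The key algebraic observation is that $\varphi_z(N_z)=S$: if $y=s^{-1}z$ for some $s\in S$, then $\varphi_z(y)=z(s^{-1}z)^{-1}=s\in S$. In particular, for every $y\in N_z$ the point $\varphi_z(y)$ lies in $S$, so the hypothesis $\udim\mu(\varphi_z(y))\le\lambda$ is available.

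Set $\delta:=\nu(N_z)/2>0$, and for each $\varepsilon>0$ define, in parallel with the proof of Theorem~\ref{groups},
\[
A_{\varepsilon,n}=\left\{y\in N_z:\frac{\log\mu(B(\varphi_z(y),r))}{\log r}\le \lambda+\varepsilon\ \text{for all}\ r<1/n\right\}.
\]
Because $\udim\mu(\varphi_z(y))\le\lambda$ at every $y\in N_z$, we have $N_z=\bigcup_n A_{\varepsilon,n}$, and Lemma~\ref{lemabart} yields some $N$ with $\nu(A_{\varepsilon,N})\ge\delta$. Writing $A_\varepsilon:=A_{\varepsilon,N}$ and $r_0:=1/N$, we have recovered the two properties (1) $\nu(A_\varepsilon)\ge\delta$ and (2) $\log\mu(B(x,r))/\log r\le\lambda+\varepsilon$ for all $x\in\varphi_z(A_\varepsilon)$ and $r<r_0$, exactly as in the proof of Theorem~\ref{groups}.

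From this point on, the computation is literally that of Theorem~\ref{groups}: for $0<r<r_0$,
\[
(\mu*\nu)(B(z,r))\ge\int_{A_\varepsilon}\mu(B(zy^{-1},r))\,d\nu(y)\ge\delta\inf_{x\in\varphi_z(A_\varepsilon)}\mu(B(x,r)),
\]
so taking logarithms, dividing by $\log r$, and letting $r\to 0$ gives $\udim(\mu*\nu)(z)\le\lambda+\varepsilon$, hence $\le\lambda$ since $\varepsilon$ was arbitrary. The only substantive point that needs verification is the identity $\varphi_z(N_z)=S$, which is what allows the hypothesis on $S$ to feed into the construction of $A_{\varepsilon,n}$; the assumption $\mu(G\setminus S)=0$ plays no role in the argument itself (it merely ensures the bound on $S$ is not vacuous), and measurability of $A_{\varepsilon,n}$ in $y$ is handled tacitly as in Theorem~\ref{groups}.
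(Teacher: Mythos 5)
Your proposal is correct and follows the paper's own argument essentially verbatim: both define $N_z=\varphi^z(S)$ (up to intersecting with $\supp\nu$, which is immaterial for the $\nu$-measure), use the hypothesis $\nu(\varphi^z(S))>0$ in place of the openness argument of Theorem~\ref{groups} to get $\nu(N_z)>0$, and then run the $A_{\varepsilon,n}$ exhaustion and convolution estimate unchanged. Your side observations (that $\varphi_z(N_z)=S$ is the point feeding the hypothesis into the construction, and that $\mu(G\setminus S)=0$ is not actually used) are consistent with what the paper does.
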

\begin{proof}
    Let $z\in S\cdot\supp\nu$ be such that $\nu(G\setminus\varphi_z(S))>0$. Define
    \[M_z=\{x\in S:\exists y\in\supp\nu:xy=z\},\]
and     \[N_z=\{y\in \supp\nu:\exists x\in S:xy=z\}.\]
Let $\varphi_z(x)=zx^{-1}$ and $\varphi^z(x)=x^{-1}z$. Then $\varphi^z(M_z)=N_z$ and $\varphi_z(N_z)=M_z$. Since $z \in S\cdot\supp\nu$, we have $N_z\neq \emptyset$. Note that $\varphi^z(S)=(S)^{-1}z=N_z$ is such that $\nu(N_z)>0$.

The rest of the proof is the same as the proof of Theorem \ref{groups}.

\end{proof}
A measure $\mu$ is \emph{continuous} if $\mu(\{x\})=0$ for all $x\in G$.
\begin{cor}
    Let $\mu$ and $\nu$ be measures in a metric group $G$, such that the metric is translation-invariant. If $\mu$ and $\nu$ are continuous measures and $\udim\mu(x)\leq \lambda$ for all $x\in\supp\mu$ except maybe a countable set, then $\udim(\mu*\nu)(z)\leq \lambda$ for all $z\in \inte(\supp \mu)\cdot\supp\nu$.
\end{cor}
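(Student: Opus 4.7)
The plan is to apply the preceding theorem with $S := \inte(\supp\mu) \setminus C$, where $C \subset \supp\mu$ is the at-most-countable exceptional set on which $\udim\mu(x) \le \lambda$ may fail. By construction $S \subset \inte(\supp\mu)$ and $\udim\mu(x) \le \lambda$ for every $x \in S$, so what is left is to check the two measure-theoretic hypotheses of that theorem and to verify that every $z \in \inte(\supp\mu)\cdot\supp\nu$ actually lies in $S\cdot\supp\nu$.

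For $\mu(G\setminus S)=0$, continuity of $\mu$ gives $\mu(C)=0$, and $\mu$ is concentrated on $\supp\mu$, reducing matters to $\mu(\supp\mu\setminus\inte(\supp\mu))$, which the standing hypotheses force to vanish. For the positivity condition, fix $z \in \inte(\supp\mu)\cdot\supp\nu$; as in the proof of Theorem \ref{groups}, the set $\varphi^z(\inte(\supp\mu))$ is open and meets $\supp\nu$, so $N_z := \varphi^z(\inte(\supp\mu))\cap\supp\nu$ satisfies $\nu(N_z)>0$. The pushforward $\varphi^z(C)=C^{-1}z$ is countable, and continuity of $\nu$ yields $\nu(\varphi^z(C))=0$, so
\[\nu(\varphi^z(S)) = \nu\bigl(N_z\setminus\varphi^z(C)\bigr) = \nu(N_z) > 0.\]
In particular $\varphi^z(S)\neq\emptyset$, which exhibits $z$ as an element of $S\cdot\supp\nu$. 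The previous theorem then gives $\udim(\mu*\nu)(z)\le\lambda$, as required.

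The main point — and, I expect, the only delicate step — is the double use of continuity: once for $\mu$, to discard the countable set $C$ inside $\inte(\supp\mu)$ so that the pointwise local-dimension bound is $\mu$-a.e., and once for $\nu$, to discard the translated image $C^{-1}z$ so that the positivity $\nu(\varphi^z(S))>0$ is preserved. Beyond this bookkeeping no new analytic estimate is needed; the corollary is essentially a direct packaging of the preceding theorem.
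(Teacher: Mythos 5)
Your overall strategy --- feeding $S=\inte(\supp\mu)\setminus C$ into the preceding theorem and using continuity of $\nu$ to discard the translated countable set $\varphi^z(C)=C^{-1}z$, so that $\nu(\varphi^z(S))=\nu(N_z)>0$ and hence $z\in S\cdot\supp\nu$ --- is exactly the intended derivation, and that half of your verification is correct. The gap is in the other hypothesis: you assert that $\mu(G\setminus S)=0$ because the standing hypotheses force $\mu(\supp\mu\setminus\inte(\supp\mu))=0$. Nothing in the corollary gives you this. Continuity of $\mu$ only kills countable sets, and $\supp\mu\setminus\inte(\supp\mu)$ need be neither countable nor $\mu$-null: take $\mu$ to be Lebesgue measure on $[0,1]$ plus an atomless Cantor measure supported on a Cantor set $K\subset[2,3]$. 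Then $\mu$ is continuous and $\udim\mu(x)\leq 1$ for every $x\in\supp\mu$, so all hypotheses of the corollary hold with $C=\emptyset$, yet $\mu(\supp\mu\setminus\inte(\supp\mu))=\mu(K)>0$. So the hypothesis $\mu(G\setminus S)=0$ of the theorem you invoke simply cannot be verified from the corollary's assumptions, and your proof as written does not go through.

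The repair is to observe that $\mu(G\setminus S)=0$ is not actually used in the proof of that theorem (nor in the proof of Theorem \ref{groups} to which it defers): the argument only needs that $\udim\mu(x)\leq\lambda$ for every $x\in S$, so that $N_z=\bigcup_n A_{\varepsilon,n}$, together with $\nu(N_z)>0$, which you have established. So either cite the proof of that theorem rather than its statement, or rerun the argument of Theorem \ref{groups} verbatim with $M_z$ and $N_z$ defined relative to $\inte(\supp\mu)\setminus C$; your continuity argument for $\nu$ then does the rest. As it stands, the step claiming $\mu(\supp\mu\setminus\inte(\supp\mu))=0$ is a genuine error, not mere bookkeeping.
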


\begin{thm}\label{thmunique}
    Let $\mu$ and $\nu$ be measures in a metric group $G$ such that the metric is translation-invariant. If $z\in \supp\mu\cdot\supp\nu$ is such that there is a unique pair $x_0\in \supp\mu$ and $y_0\in \supp\mu$ such that $x_0y_0=z$ then 
    \[\udim(\mu*\nu)(z)=\udim(\mu\times\nu)(x_0,y_0).\]
\end{thm}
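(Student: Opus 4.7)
By the argument used in the proof of Theorem~\ref{generalb} (which only needs a single decomposition $z = x_0 y_0$, not the boundary hypothesis that is invoked only to take an infimum), one already has $\udim(\mu*\nu)(z) \leq \udim(\mu\times\nu)(x_0, y_0)$. It thus suffices to prove the reverse inequality $\udim(\mu*\nu)(z) \geq \udim(\mu\times\nu)(x_0, y_0)$. The plan is to use the unique-decomposition hypothesis to localize the convolution integral to an arbitrarily small neighborhood of $(x_0, y_0)$.

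First I would establish the following \emph{localization lemma}: for every $\eta > 0$ there exists $r_0(\eta) > 0$ such that whenever $r < r_0(\eta)$ and $(x, y) \in \supp\mu \times \supp\nu$ satisfies $d(xy, z) < r$, one has $d(x, x_0) < \eta$ and $d(y, y_0) < \eta$. The proof is by contradiction together with sequential compactness: a violating sequence $(x_n, y_n) \in \supp\mu \times \supp\nu$ with $d(x_n y_n, z) \to 0$ but staying at distance $\geq \eta$ from $(x_0, y_0)$ would, after extracting a convergent subsequence (using compactness of supports, or in the regular case a reduction to compact subsets of large measure), converge to $(x_*, y_*) \in \supp\mu \times \supp\nu$ with $x_* y_* = z$ by continuity of multiplication, yielding a second decomposition of $z$ and contradicting uniqueness.

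Granted the lemma, for $r \leq r_0(\eta)$ I would write
\[(\mu*\nu)(B(z,r)) = \int_{B(y_0,\eta)\cap\supp\nu} \mu(B(zy^{-1}, r)) \, d\nu(y),\]
since the integrand vanishes off $B(y_0, \eta)$. Translation invariance gives $d(zy^{-1}, x_0) = d(y, y_0) < \eta$ for $y \in B(y_0, \eta)$, so $B(zy^{-1}, r) \subseteq B(x_0, r + \eta)$, and hence
\[(\mu*\nu)(B(z,r)) \leq \mu(B(x_0, r+\eta))\cdot \nu(B(y_0, \eta)).\]
Taking $\eta = r$, which is an admissible choice as soon as $r \leq r_0(r)$, and passing to $\limsup$ yields
\[\udim(\mu*\nu)(z) \geq \limsup_{r \to 0} \frac{\log\bigl[\mu(B(x_0, 2r))\cdot \nu(B(y_0, r))\bigr]}{\log r} = \udim(\mu\times\nu)(x_0, y_0),\]
with the product space carrying the supremum metric implicit in Theorem~\ref{generalb}.

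The main obstacle is the compactness step inside the localization lemma: it is immediate for compact supports, but in general requires restricting to compact subsets of large $\mu$- and $\nu$-measure via regularity and then arguing that the residual contribution to the convolution integral is negligible at the logarithmic scale. A secondary subtlety is ensuring the quantitative strength $r_0(\eta) \geq \eta$ for $\eta$ small, so that the substitution $\eta = r$ is admissible; this follows from the single-limit-point character of the compactness argument once the reduction to compact supports has been carried out.
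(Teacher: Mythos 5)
Your first direction is correct and is exactly what the paper does: the single decomposition $z=x_0y_0$ gives $(\mu*\nu)(B(z,r))\geq\mu(B(x_0,r/2))\nu(B(y_0,r/2))$, hence $\udim(\mu*\nu)(z)\leq\udim(\mu\times\nu)(x_0,y_0)$. For the reverse inequality the paper simply asserts that $xy\in B(z,r)$ forces $x\in B(x_0,r)$ and $y\in B(y_0,r)$; you are right that this implication is not automatic and needs an argument, and your localization lemma (uniqueness plus compactness: for every $\eta>0$ there is $r_0(\eta)$ such that $d(xy,z)<r_0(\eta)$ implies $d(x,x_0)<\eta$ and $d(y,y_0)<\eta$) is the natural candidate. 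You have also correctly isolated the crux: the argument only closes if $r_0(\eta)\gtrsim\eta$, i.e.\ if localization occurs at scale comparable to $r$. Without such a rate, the bound $(\mu*\nu)(B(z,r))\leq\mu(B(x_0,r+\eta(r)))\,\nu(B(y_0,\eta(r)))$ with $\eta(r)\to 0$ arbitrarily slowly is useless after dividing by $\log r$: if, say, $\eta(r)\sim 1/\log(1/r)$, then $\log\mu(B(x_0,\eta(r)))/\log r\to 0$ even when $\mu$ has positive local dimension at $x_0$.

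That crux is a genuine gap, not a secondary subtlety: the rate $r_0(\eta)\gtrsim\eta$ does not follow from the compactness argument (a single limit point controls where violating sequences accumulate, not how fast), and in fact the inequality $\udim(\mu*\nu)(z)\geq\udim(\mu\times\nu)(x_0,y_0)$ can fail under the stated hypotheses. Take $\mu=\sum_{n\geq2}n^{-2}\delta_{1/n}$ and $\nu=\sum_{n\geq2}n^{-2}\delta_{-1/n+e^{-n}}$ on $\R$; both are finite with compact support, one checks that $z=0$ admits only the decomposition $0=0+0$ (the equations $1/m-1/n=e^{-m}$ have no integer solutions with $m,n\geq2$), and $\dim_{loc}\mu(0)=\dim_{loc}\nu(0)=1$, so $\udim(\mu\times\nu)(0,0)=2$. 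Yet $(\mu*\nu)(B(0,r))\geq\sum_{n>\log(1/r)}n^{-4}\asymp(\log(1/r))^{-3}$, which gives $\udim(\mu*\nu)(0)=0$. So neither your localization strategy nor the paper's one-line assertion can be completed at this level of generality; an additional hypothesis guaranteeing localization at scale $O(r)$ is required. Such a hypothesis does hold in the paper's actual application of this theorem (Theorem \ref{th}, case $c-b>1$), where the gap $[b,c]$ in $\supp\nu$ forces $y\in[b-r,b]$ and $x\in[1-2r,1]$ whenever $x+y\in B(1+b,r)$ and $r<c-b-1$, so the result survives there even though the general statement does not.
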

\begin{proof}

 Whenever $y\in B(y_0,r/2)$ and $ x\in B(x_0,r/2)$, we have $xy\in B(z,r)$. Hence \[(\mu*\nu)(B(z,r)\geq\mu(B(x_0,r/2))\nu(B(y_0,r/2)).\] In contrast, if $xy\in B(z,r)$ then $y\in B(y_0,r)$ and $x\in B(x_0,r)$. Thus \[(\mu*\nu)(B(z,r))\leq\mu(B(x_0,r))\nu(B(y_0,r)).\]
 From this two inequalities we obtain
    \begin{align*}
            \frac{\log(\mu(B(x_0,r))\nu(B(y_0,r)))}{\log(r)}& \leq\frac{\log((\mu*\nu)(B(z,r)))}{\log(r)}\\ &\leq \frac{\log(\mu(B(x_0,r/2))\nu(B(y_0,r/2)))}{\log(r)}.
        \end{align*}
        Taking $\limsup$ when $r\to0^+$ completes the proof.
\end{proof}
\begin{remark}
    Note that the same is true for $\ldim$ and $\dim_{loc}$.
\end{remark}
\begin{lem}\label{gkha}
    Let $G$ be a metric group such that translations are invariant under the metric. Let $\mu_1,\dots,\mu_n$ be measures on $G$, and set $\nu=\mu_1*\dots*\mu_n$. Let $z\in \supp\mu_1*\dots*\mu_n$  and let $x_i\in \supp\mu_i$ for $i=1,\dots,n$ are such that $x_1x_2\dots x_n=z$. Then 
    \[\udim\nu(z)\leq \sum_{i=1}^n\udim\mu_i(x_i).\]
\end{lem}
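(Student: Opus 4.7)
The plan is to proceed by induction on $n$, peeling off one factor at a time and invoking the two-factor bound already proved as Theorem \ref{generalb}. The base case $n=1$ is immediate since $\nu=\mu_1$ and $z=x_1$.

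For the inductive step, set $\tau=\mu_1*\cdots*\mu_{n-1}$ and $w=x_1\cdots x_{n-1}$, so that $\nu=\tau*\mu_n$ and $z=wx_n$. Continuity of the group multiplication forces $\supp\mu\cdot\supp\sigma\subseteq\supp(\mu*\sigma)$ for any two measures, so an easy induction on the number of factors gives $w\in\supp\tau$. Applying (the ball estimate at the heart of) Theorem \ref{generalb} to the two-fold convolution $\tau*\mu_n$ at the point $z$ with factorization $z=wx_n$ yields
\[\udim\nu(z)\leq\udim(\tau\times\mu_n)(w,x_n).\]

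It remains to bound the local dimension of a product measure by the sum of those of its factors. Equipping $G\times G$ with the max metric, the product-measure ball satisfies $(\tau\times\mu_n)(B((w,x_n),r))=\tau(B(w,r))\mu_n(B(x_n,r))$. Taking $\log$, dividing by $\log r<0$ for small $r$, and using subadditivity of $\limsup$ gives
\[\udim(\tau\times\mu_n)(w,x_n)\leq\udim\tau(w)+\udim\mu_n(x_n).\]
Combining this with the inductive hypothesis $\udim\tau(w)\leq\sum_{i=1}^{n-1}\udim\mu_i(x_i)$ closes the induction.

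The argument is essentially routine; the only mildly delicate point is reading out the fact that the proof of Theorem \ref{generalb} actually produces the ball estimate $(\mu*\sigma)(B(z,r))\geq\mu(B(x_0,r/2))\sigma(B(y_0,r/2))$ for \emph{every} factorization $z=x_0y_0$, independently of any boundary condition on $z$, so the inductive step genuinely goes through. A one-shot alternative that bypasses the induction is to use the bi-translation-invariance of the metric to deduce $d(y_1\cdots y_n,x_1\cdots x_n)\leq\sum_i d(y_i,x_i)$, from which $\nu(B(z,r))\geq\prod_i\mu_i(B(x_i,r/n))$; taking logs, dividing by $\log r$, and exploiting $\log(r/n)/\log r\to 1$ as $r\to 0$ gives the inequality in one step.
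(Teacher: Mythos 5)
Your proposal is correct, but note that the paper offers no proof of Lemma \ref{gkha} at all -- it is stated bare -- so there is nothing to compare against line by line. Your inductive argument is exactly the proof the paper implicitly intends: the ball estimate $(\mu*\sigma)(B(z,r))\geq\mu(B(x_0,r/2))\,\sigma(B(y_0,r/2))$ from the proofs of Theorems \ref{generalb} and \ref{thmunique} holds for \emph{any} factorization $z=x_0y_0$ with $x_0\in\supp\mu$, $y_0\in\supp\sigma$ (the boundary hypothesis in Theorem \ref{generalb} plays no role in that inequality), and you correctly supply the two auxiliary facts the induction needs, namely $\supp\mu\cdot\supp\sigma\subseteq\supp(\mu*\sigma)$ and $\udim(\tau\times\mu_n)(w,x_n)\leq\udim\tau(w)+\udim\mu_n(x_n)$, the latter being the subadditivity remark the paper itself records after Theorem \ref{boundar}. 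Your one-shot alternative, $\nu(B(z,r))\geq\prod_i\mu_i(B(x_i,r/n))$ followed by $\log(r/n)/\log r\to1$, is cleaner still and avoids the induction entirely. The only point worth flagging is that both your argument and the paper's own proof of Theorem \ref{generalb} silently use that $B(x_0,r/2)\cdot B(y_0,r/2)\subseteq B(x_0y_0,r)$, which requires the metric to be invariant under translations on \emph{both} sides; since the paper takes this for granted under the phrase ``translation-invariant,'' your proof is consistent with its conventions.
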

By $\mu^k$ we mean the convolution of $\mu$ with itself $k$ times.
\begin{thm}
    Let $\mu$ be a measure on $\mathbb{T}=\R/\Z$ such that $\udim\mu\leq\lambda$ for all $x\in \supp\mu$. If, in addition, if for some $N\in\N$ we have that $\supp\mu^N$ has non-empty interior. Then there exists $k_0\in \N$ such that for all $K\geq k_0$ we have \[\udim\mu^{N+k}(z)\leq \lambda N\ \forall z\in \mathbb{T}.\]
\end{thm}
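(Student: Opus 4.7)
The plan is to combine Lemma \ref{gkha}, which propagates the upper local dimension bound from $\mu$ to $\mu^N$, with Theorem \ref{groups}, which allows us to add one more convolution factor provided the target point lies in the interior-times-support set. The overall strategy is to show that for $k$ large enough one has $\supp\mu^k = \mathbb{T}$, so that applying Theorem \ref{groups} to the factorization $\mu^{N+k} = \mu^N * \mu^k$ yields the conclusion at every point of $\mathbb{T}$.

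First I would observe that $\mathbb{T}$ is compact, so $\supp\mu$ is compact and the $N$-fold Minkowski sum $\supp\mu + \cdots + \supp\mu$ is itself compact and hence closed, so it coincides with $\supp\mu^N$. In particular, every $z \in \supp\mu^N$ admits a decomposition $z = x_1 + \cdots + x_N$ with each $x_i \in \supp\mu$, and Lemma \ref{gkha} yields $\udim\mu^N(z) \leq N\lambda$ for every $z \in \supp\mu^N$, and in particular for every $z \in \inte(\supp\mu^N)$.

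Next I would use the interior hypothesis together with the arc structure of $\mathbb{T}$ to show that $\supp\mu^k$ covers the whole torus eventually. Fixing an open arc $I \subset \inte(\supp\mu^N)$ of some length $\ell > 0$ and iterating Minkowski sums, $\supp\mu^{jN}$ contains the $j$-fold sum $jI$, which is an arc of length $j\ell \pmod 1$. Taking $j^\ast = \lceil 1/\ell \rceil$, the arc $j^\ast I$ wraps around $\mathbb{T}$ completely, so $\supp\mu^{j^\ast N} = \mathbb{T}$. Since adding any nonempty subset of $\mathbb{T}$ to $\mathbb{T}$ still gives $\mathbb{T}$, this upgrades to $\supp\mu^k = \mathbb{T}$ for every $k \geq j^\ast N$.

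Setting $k_0 = j^\ast N$, for any $k \geq k_0$ I would write $\mu^{N+k} = \mu^N * \mu^k$ and apply Theorem \ref{groups} with $\mu^N$ in place of $\mu$ and $\mu^k$ in place of $\nu$: since $\udim\mu^N(x) \leq N\lambda$ for all $x \in \inte(\supp\mu^N)$, the theorem gives $\udim\mu^{N+k}(z) \leq N\lambda$ for every $z \in \inte(\supp\mu^N) + \supp\mu^k$. As $\supp\mu^k = \mathbb{T}$ and $\inte(\supp\mu^N) \neq \emptyset$, this set is all of $\mathbb{T}$, which yields the claim. The only subtlety is the first step---ensuring that the bound from Lemma \ref{gkha} applies at every interior point of $\supp\mu^N$---which relies on compactness of $\mathbb{T}$ forcing the Minkowski sum to be closed; everything else is an arc-length count and a direct appeal to Theorem \ref{groups}.
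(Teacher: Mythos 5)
The paper states this theorem without giving a proof, but your argument is correct and uses exactly the ingredients the paper lines up for it: Lemma \ref{gkha} to get $\udim\mu^N(z)\leq N\lambda$ on all of $\supp\mu^N$ (with the compactness argument correctly justifying that $\supp\mu^N$ is the genuine $N$-fold Minkowski sum, so every point admits a decomposition), arc growth to force $\supp\mu^k=\mathbb{T}$ for large $k$, and Theorem \ref{groups} applied to the factorization $\mu^{N+k}=\mu^N*\mu^k$. The only nitpick is the edge case $j^*\ell=1$, where the open arc $j^*I$ misses a single point; since $\supp\mu^{j^*N}$ is closed and contains this dense arc it still equals $\mathbb{T}$, so nothing is lost.
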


\begin{thm}\label{lowerlocaldimensionbound}
    Let G be a metric group with a metric that is translation invariant.  Let $\mu$ and $\nu$ be regular measures. Suppose $\mu$ is such that $\ldim\mu(x)\geq\ \alpha$. Define
    \[B_{\varepsilon,n}^z=\left\{x\in \supp\nu:\frac{\log(\mu(B(\varphi_z(x),r)))}{\log(r)}\geq \alpha -\varepsilon \text{ for all }r<1/n\right\}.\]
    then 
    \[\ldim\mu*\nu(z)\geq \alpha\ \forall 
    z\in\supp\nu\cdot\supp\nu.\]
      such that for all $\varepsilon>0$ there is $n_0\in \N$ such that
    \begin{equation}\label{eqforuniforlowerbound}
        \nu(G\setminus B_{\varepsilon,n_0}^z)=0,
    \end{equation}
    
\end{thm}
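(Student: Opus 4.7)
The strategy is dual to that of Theorem \ref{groups}: where the upper-dimension proof obtained a \emph{lower} bound on $(\mu*\nu)(B(z,r))$ by restricting the defining integral to a set of positive $\nu$-measure, here I want an \emph{upper} bound on $(\mu*\nu)(B(z,r))$, and for that I need a uniform pointwise bound on $\mu(B(\varphi_z(y),r))$ that holds for $\nu$-almost every $y$. This is exactly what the hypothesis $\nu(G\setminus B_{\varepsilon,n_0}^z)=0$ provides.

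Fix $\varepsilon>0$ and let $n_0=n_0(\varepsilon)$ be as in the hypothesis. Using the translation invariance of the metric and the Fubini-style identity from Definition \ref{conv},
\[(\mu*\nu)(B(z,r))=\int_G \mu(B(z,r)y^{-1})\,d\nu(y)=\int_G \mu(B(zy^{-1},r))\,d\nu(y)=\int_G \mu(B(\varphi_z(y),r))\,d\nu(y).\]
Since $\nu$ is concentrated on $B_{\varepsilon,n_0}^z$, I may restrict the domain of integration to that set without changing the value. On $B_{\varepsilon,n_0}^z$ the defining inequality gives, for every $r<1/n_0$, the uniform bound $\mu(B(\varphi_z(y),r))\leq r^{\alpha-\varepsilon}$ (recall $\log r<0$ for small $r$, so the direction of the inequality in the definition of $B_{\varepsilon,n_0}^z$ reverses when exponentiating). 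Consequently
\[(\mu*\nu)(B(z,r))\leq \nu(B_{\varepsilon,n_0}^z)\cdot r^{\alpha-\varepsilon}\leq \nu(G)\cdot r^{\alpha-\varepsilon}\]
for all $0<r<1/n_0$.

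Taking logarithms and dividing by the negative quantity $\log r$ flips the inequality, yielding
\[\frac{\log((\mu*\nu)(B(z,r)))}{\log r}\geq (\alpha-\varepsilon)+\frac{\log \nu(G)}{\log r}.\]
Letting $r\to 0^+$ the last term vanishes, so $\ldim(\mu*\nu)(z)\geq \alpha-\varepsilon$; since $\varepsilon>0$ was arbitrary, the conclusion follows.

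There is no real obstacle here: unlike the upper-dimension statement (Theorem \ref{groups}), where one had to argue that a \emph{positive}-measure good set exists via Lemma \ref{lemabart}, the lower-dimension statement is essentially handed the right hypothesis. The only point that deserves care is the direction of the inequalities when passing through $\log r<0$, and the fact that one must integrate over a set of \emph{full} $\nu$-measure rather than merely positive measure — this is why the hypothesis in \eqref{eqforuniforlowerbound} is strictly stronger than the one used for the upper bound, and it is precisely this strength that makes the above one-line computation valid.
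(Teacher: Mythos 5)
Your proof is correct and follows essentially the same route as the paper's: both restrict the convolution integral to the full-$\nu$-measure set $B_{\varepsilon,n_0}^z$, use the defining inequality of that set to get the uniform bound $\mu(B(\varphi_z(y),r))\leq r^{\alpha-\varepsilon}$ for $r<1/n_0$, and then take logarithms and let $r\to 0$. The only cosmetic difference is that the paper passes through a supremum over the set before invoking the pointwise bound, whereas you apply the pointwise bound directly under the integral.
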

\begin{proof}
    Let $z\in\supp\nu\cdot\supp\nu$ be such that for all $\varepsilon>0$, there exists $n_0\in \N$ such that the equation \eqref{eqforuniforlowerbound} is true. Let $0<r<n_0$

    \begin{align*}
    (\mu*\nu)(B(z,r))&=\iint \chi_{B(z,r)}(xy)d\mu(x)d\nu(y)\\
    &=\int\mu(B(z,r)y^{-1})d\nu(y)\\
    &=\int_{B_{\varepsilon,n_0}^z}\mu(B(z,r)y^{-1})d\nu(y)\\
    &\leq\int_{B_{\varepsilon,n_0}^z}\sup_{t\in B_{\varepsilon,n_0}^z}\{ \mu(B(zt^{-1},r))\}d\nu(y)\\
    &=\sup_{t\in B_{\varepsilon,n_0}^z}\{ \mu(B(zt^{-1},r))\}\int_{B_{\varepsilon,n_0}^z}d\nu(y)\\    &=\left(\inf_{x\in\varphi_z(B_{\varepsilon,n_0}^z)}\mu(B(x,r))\right)\nu(B_{\varepsilon,n_0}^z),
\end{align*}

\[\frac{\log((\mu*\nu)(B(z,r)))}{\log(r)}\geq\inf_{x\in\varphi_z(B_{\varepsilon,n_0}^z}\frac{\log(\mu(B(x,r)))}{\log(r)}+\frac{\log(\nu(B_{\varepsilon,n_0}^z))}{\log(r)}\geq \alpha+\varepsilon+\frac{\log(\nu(B_{\varepsilon,n_0}^z)}{\log(r)}.\]
Letting $r\to 0$ we can conclude
\[\ldim (\mu*\nu)(z)\geq \alpha-\varepsilon\Rightarrow\udim (\mu*\nu)(z)\geq \alpha.\]
\end{proof}
\section{The Real Line}\label{Srealline}
On the real line, we will concentrate on measures with bounded support.
\begin{defi}
    Let $\mu$ be a finite measure with $\supp\mu$. We say $(c,d)\subset\operatorname{hull}\supp\mu$ is a \emph{gap} in $\supp \mu$ if $c,d\in \supp\mu$ and $(c,d)\cap \supp\mu=\emptyset$. 
\end{defi}
\begin{thm}\label{Theorem1}
Let $\mu$ and $\nu$ be measures, with $\supp \mu=[0,1]$ and $0,D\in \supp \nu\subset [0,D]$. Furthermore, assume that the largest gap in $\supp \nu$ has a diameter less than $1$. 
If $\udim \mu(x)\leq \lambda<\infty$ for all $x\in (0,1)$, then $\udim (\mu*\nu)(z)\leq \lambda$ for all $z\in (0,D+1)$. 
\end{thm}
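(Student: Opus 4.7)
The plan is to reduce this statement directly to Theorem~\ref{groups} applied to $G=\R$ with addition, the Euclidean metric being translation-invariant. Since $\inte(\supp\mu)=(0,1)$, that theorem already gives $\udim(\mu*\nu)(z)\leq\lambda$ for every $z\in(0,1)+\supp\nu$, so the whole task reduces to verifying the containment $(0,D+1)\subset(0,1)+\supp\nu$.

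Write $(0,1)+\supp\nu=\bigcup_{y\in\supp\nu}(y,y+1)$. Because $0,D\in\supp\nu$, the two intervals $(0,1)$ and $(D,D+1)$ are automatically contained in the union, which already covers $(0,D+1)$ whenever $D\leq 1$. The interesting range is therefore $D\geq 1$ and $z\in[1,D]$: for such a $z$, membership of $z$ in $(0,1)+\supp\nu$ is equivalent to the existence of some $y\in\supp\nu$ with $y<z<y+1$, i.e.\ to $\supp\nu\cap(z-1,z)\neq\emptyset$. This is the one step where the gap hypothesis is used.

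Suppose for contradiction that some $z\in[1,D]$ satisfies $\supp\nu\cap(z-1,z)=\emptyset$. Set $a=\max\bigl(\supp\nu\cap[0,z-1]\bigr)$ and $b=\min\bigl(\supp\nu\cap[z,D]\bigr)$, both well-defined because $\supp\nu$ is closed and contains $0$ and $D$. By the choice of $a$ no point of $\supp\nu$ lies in $(a,z-1]$, by assumption none lies in $(z-1,z)$, and by the choice of $b$ none lies in $[z,b)$; hence $(a,b)\cap\supp\nu=\emptyset$, so $(a,b)$ is a gap in $\supp\nu$. Its diameter satisfies $b-a\geq z-(z-1)=1$, contradicting the assumption that every gap has diameter strictly less than $1$.

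With the inclusion $(0,D+1)\subset(0,1)+\supp\nu$ established, Theorem~\ref{groups} closes the argument. The main obstacle is really only bookkeeping around the endpoints $z=1$ and $z=D$; both are absorbed by the gap-contradiction above, since any failure at those boundary values produces a gap of diameter exactly or exceeding~$1$.
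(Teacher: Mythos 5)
Your proposal is correct and takes essentially the same route as the paper: the paper's entire proof is the observation that $(0,D+1)=\inte(\supp\mu)+\supp\nu$ followed by an appeal to Theorem~\ref{groups}. You simply supply the details of the set inclusion (via the gap-of-diameter-$\geq 1$ contradiction) that the paper asserts without proof, and that verification is sound.
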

\begin{remark}
    We note that by rescaling and translation we can change $\supp \mu$ from any interval $[a,b]$ to $[0,1]$, without changing the result. 
\end{remark}
\begin{proof}
    Note that $(0,D+1)=\inte(\supp\mu)+\supp\nu$, so we can apply Theorem \ref{groups}.
\end{proof}

The next example illustrates that we can weaken the condition on $\udim\mu$. 
\begin{ej}\label{example1}
    Consider the IFS given by $\{S_i\}_{i=0}^2$ with $S_i=x/3+i/3$. The attractor of this IFS is $[0,1]$, we consider the measure given by \[\mu(A)=\frac{2}{5}\mu\circ S_0^{-1}(A)+\frac{1}{5}\mu\circ S_1^{-1}(A)+\frac{2}{5}\mu\circ S_2^{-1}(A).\]
    Using the techniques of \cite{falconer2}, we see that $\udim\mu(x)\leq \log(5)/\log(3)$. But it is also true that $\mu$ is exact dimensional, i.e., for almost all points $x$ with respect to $\mu$ and the Lebesgue measure has $\udim \mu(x)=\log(4/125)/\log(1/27)$.

    Now, consider $\mu*\mu$. According to Theorem \ref{Theorem1} we have $\udim\mu*\mu(z) \leq \log(5)/\log(3)$ for $z\in (0,2)$. Note that $\udim \mu*\mu(1)\leq \log(4/125)/\log(1/27)<\log(5)/\log(3)$. 

     We will give the details of the example after the next theorem.
\end{ej}
\begin{remark}
    We note that the IFS defined on the above example satisfies the OSC.
\end{remark}

\begin{thm}\label{notallpoints}
     Let $\mu$ and $\nu$ be measures, with $\supp \mu=[0,1]$ and $0,D\in\supp \nu\subset[0,D]$. Furthermore, assume that the largest gap in $\supp\nu$ has diameter less than $1$. If $\udim\mu(x)\leq \lambda$ for all $x\in S$ with $\mu([0,1]\setminus S)=0$. Then $\udim(\mu*\nu)(z)\leq \lambda$ for all $z\in (0,D+1)$  such that $\nu(\varphi_z(S))>0$.
\end{thm}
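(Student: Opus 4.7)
The plan is to recognize this statement as the real-line instantiation of the subset variant of Theorem \ref{groups} (the theorem preceding the corollary about continuous measures in Section 2), following the same reduction used in the proof of Theorem \ref{Theorem1}. The only substantive task is the geometric identification $(0, D+1) = \inte(\supp\mu) + \supp\nu$; once that is in place, the conclusion transfers essentially verbatim from the group-theoretic result.

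To establish $(0, D+1) \subseteq (0, 1) + \supp\nu$, I would split into three cases. For $z \in (0, 1)$, write $z = z + 0$ using $0 \in \supp\nu$; for $z \in (D, D+1)$, write $z = (z-D) + D$ using $D \in \supp\nu$. For $z \in [1, D]$, consider the open interval $J = (z-1, z) \subset [0, D]$, which has length $1$. Because the connected components of $[0, D] \setminus \supp\nu$ are open intervals (the ``gaps''), each of diameter strictly less than $1$ by hypothesis, the connected interval $J$ cannot be contained in any single gap, and hence $J \cap \supp\nu \neq \emptyset$. Choosing any $y$ in this intersection yields $x := z - y \in (0, 1)$ with $z = x + y$. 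The reverse inclusion is automatic from $(0, 1) + [0, D] = (0, D+1)$ and $\supp\nu \subseteq [0, D]$.

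With this geometric identity established, I would apply the subset variant of Theorem \ref{groups} to $G = \R$ equipped with the usual translation-invariant metric. The hypothesis $\mu([0,1] \setminus S) = 0$ combined with $\supp\mu = [0,1]$ gives $\mu(\R \setminus S) = 0$; possibly intersecting $S$ with $(0, 1)$ arranges $S \subset \inte(\supp\mu)$ without disturbing these conditions (the removal affects at most the two boundary points). Since $\R$ is abelian, the maps $\varphi_z$ and $\varphi^z$ coincide as $y \mapsto z - y$, so the hypothesis $\nu(\varphi_z(S)) > 0$ is exactly what the subset variant of Theorem \ref{groups} demands, and its conclusion $\udim(\mu*\nu)(z) \leq \lambda$ is precisely what is claimed. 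The main obstacle is the gap-covering argument in the middle paragraph; once that is carried out, the rest of the proof is a direct invocation of a result already established.
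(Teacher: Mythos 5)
Your proposal matches the paper's (implicit) route: Theorem \ref{notallpoints} is stated without proof, clearly intended to follow exactly as Theorem \ref{Theorem1} does, via the identity $(0,D+1)=\inte(\supp\mu)+\supp\nu$ (which your gap-covering argument establishes correctly) followed by an appeal to the subset variant of Theorem \ref{groups}, with $\varphi_z=\varphi^z$ in the abelian setting. The only loose end is the replacement of $S$ by $S\cap(0,1)$, which can disturb the conditions $\mu(G\setminus S)=0$ and $\nu(\varphi_z(S))>0$ if $\mu$ or $\nu$ carries atoms at the relevant boundary points; since the former hypothesis is not actually used in the paper's proof of the subset variant and the atomic case is handled directly by Lemma \ref{gkha}, this is harmless.
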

\begin{ej}[Details of Example \ref{example1}]

Let $B$ be the set of points such that $\udim\mu(x)\neq\log(4/125)/\log(1/27)$, then $\mu(B)=0$.

Note that $\varphi_1(B)=1-B=B$. This is due to $\mu$ being a symmetric measure with respect to $1/2$ and $\varphi_1$ being equivalent to reflecting on $1/2$, therefore $\mu(\varphi_1(B))=0$. Then, as before, $N_z=\{y\in \supp\,u:\exists x\in(0,1)\setminus B:x+y=z\}$ has a positive measure $\mu$. 
\end{ej}
The next example shows that the conditions of Theorem \ref{notallpoints} are not always met. 
\begin{ej}
    Consider the IFS given by $F_0(x)=\frac{x}{2}$ and $F_1(x)=\frac{x+1}{2}$. Let $p\in(0,1/2]$, we define
    \begin{equation*}
        \mu_p(A)=p\mu_p\circ F_0^{-1}(A)+(1-p)\mu_p\circ F_1^{-1}(A).
    \end{equation*}
    Let $n(x|_k)$ be the number of occurrences of the digit $0$ in the first $k$ digits of the binary expression of $x$. Let \[K_p=\left\{x\in[0,1]:\lim_{k\to\infty}\frac{n(x|_k)}{k}=p\right\}.\]
    Then, as shown in \cite{falconer2}, $\mu_p(K_p)=1$ and for all $x\in K_p$ we have \[\udim\mu_p(x)=\frac{-(p\log(p)+(1-p)\log(1-p))}{\log(2)}=s(p).\] Consider $\mu_{1/4}$ and $\mu_{1/3}$. Then \[B=\left\{x\in[0,1]:\udim\mu_{1/3}\neq s(1/3)\right\}=\left\{x\in[0,1]:\lim_{k\to\infty}\frac{n(x|_k)}{k}\neq 1/3\right\}.\]
    Moreover, $\mu_{1/3}(B)=0$. Note that $K_{1/4}\subset \varphi_1(B)$, we have $\mu_{1/4}(B)=1$, and hence $\mu_{1/4}(\{y\in \supp\nu:\exists x\in(0,1)\setminus B:x+y=1\})=0$. Hence, the points in $K_{1/3}^c$ are important for $\udim\mu_{1/3}*\mu_{1/4}$.
\end{ej}
\begin{thm}\label{theo2}
    Let $\mu$ and $\nu$ be measures, with $\supp \mu=[0,1]$ and $0,D\in\supp \nu\subset[0,D]$. Furthermore, assume that the largest gap in $\supp\nu$ has diameter less than $1$. If $\mu$ and $\nu$ are continuous measures, that is, $\mu(\{x\})=\nu(\{x\})=0$ for all $x$. If $\udim\mu(x)\leq \lambda$ for all $x\in\supp\mu$ except maybe a countable set. Then $\udim(\mu*\nu)(z)\leq \lambda$ for all $z\in (0,D+1)$.
\end{thm}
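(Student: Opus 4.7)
The plan is to reduce the statement directly to Theorem \ref{notallpoints}. Let $C \subset [0,1]$ be the countable exceptional set outside of which the bound $\udim\mu(x) \le \lambda$ holds, and set $S := [0,1] \setminus C$. Continuity of $\mu$ gives $\mu(C)=0$, so $\mu([0,1]\setminus S)=0$, which is the first hypothesis of Theorem \ref{notallpoints}. All that remains is to verify that $\nu(\varphi_z(S)) > 0$ for every $z \in (0,D+1)$, where on the real line $\varphi_z(y) = z - y$.

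A direct computation gives $\varphi_z(S) = z - S = [z-1,z] \setminus (z-C)$. Since $C$ is countable, so is $z-C$, and continuity of $\nu$ then forces $\nu(z-C)=0$. Therefore $\nu(\varphi_z(S)) = \nu([z-1,z])$, and the problem reduces to showing $\nu([z-1,z])>0$ for every $z \in (0,D+1)$. By the definition of support, it suffices to exhibit a point of $\supp\nu$ inside the open interval $(z-1,z)$, as any sufficiently small open neighborhood of such a point sits inside $(z-1,z)$ and has positive $\nu$-measure.

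To produce such a point I would split into cases according to the position of $z$. For $z \in (0,1)$ one has $0 \in (z-1,z) \cap \supp\nu$, and for $z \in (D,D+1)$ one has $D \in (z-1,z) \cap \supp\nu$, so these cases are immediate. For $z \in [1,D]$ the open interval $(z-1,z)$ has length exactly $1$ and lies inside the convex hull $[0,D]$ of $\supp\nu$. If it were disjoint from $\supp\nu$, then being connected it would lie inside a single connected component of $[0,D]\setminus\supp\nu$, i.e.\ inside a single gap, which would then have diameter at least $1$, contradicting the hypothesis that every gap has diameter less than $1$. With $(z-1,z)\cap\supp\nu\neq\emptyset$ established in all cases, Theorem \ref{notallpoints} delivers the claimed bound.

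The only step that requires genuine attention is the boundary of the case split, namely $z=1$ and $z=D$, where neither $0$ nor $D$ lies in the open interval $(z-1,z)$ and one must really invoke the strict gap bound: the gap incident to $0$ (resp.\ to $D$) has length strictly less than $1$, so it contributes a point of $\supp\nu$ inside $(0,1)$ (resp.\ $(D-1,D)$). Everything else is routine bookkeeping on top of Theorem \ref{notallpoints}.
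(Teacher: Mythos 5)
Your proof is correct and follows essentially the route the paper intends (the paper leaves Theorem \ref{theo2} without an explicit proof, treating it as the real-line instance of its continuous-measure corollary, which in turn rests on the $S$-set theorem, i.e.\ Theorem \ref{notallpoints}). You supply exactly the missing verifications: continuity of $\mu$ kills the countable exceptional set, continuity of $\nu$ kills its translate, and the gap condition forces $\supp\nu\cap(z-1,z)\neq\emptyset$, hence $\nu(\varphi_z(S))>0$, for every $z\in(0,D+1)$.
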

In the next example, we show that the bound is sharp. 
\begin{ej}
    Let $\mu$ be the Lebesgue measure on $[0,1]$ and $\nu$ be a measure such that $0,D\in\supp\nu\subset [0,D]$, such that the largest gap has diameter less than 1. Then   
    \begin{equation}
        \dim_{loc}\mu*\nu(z)=1\ \forall z\in (0,D+1)
    \end{equation}
    Note that from Theorem \ref{Theorem1} we already know that $\udim\mu*\nu(z)\leq 1$ for all $z\in (0,D+1)$. For the lower bound, we proceed in similar ways as in our theorems. Let $z\in \supp\mu*\nu$ and $\varepsilon>0$ and $z\neq 0,D+1$ 
    Let \[M_z=\{x\in (0,1):\exists y\in\supp\nu:x+y=z\}\]
and     \[N_z=\{y\in \supp\nu:\exists x\in(0,1):x+y=z\}\]

Let $\varphi_z(x):x\mapsto z-x$. Then we have $\nu(N_z)>0$. Note that for all $x\in N_z$ and all $r>0$ we have $\mu(B(\varphi_z(x),r))\leq 2r$ Then we have
\begin{align*}
    (\mu*\nu)(B(z,r))&=\iint \chi_{B(z,r)}(x+y)d\mu(x)d\nu(y)\\
    &=\int_{M_z\cup \{0,1\}}\nu(B(z,r)-x)d\mu(x)\\
    &=\int_{M_z}\nu(B(z,r)-x)d\mu(x)\\
    &=\int_{N_z}\mu(B(z,r)-y)d\nu(y)\\
    &\leq\int_{N_z}2r d\nu(y)\\
    &\leq 2r \int_{N_z} d\nu(y).
\end{align*}
Applying logarithms to both sides gives us
\[\frac{\log((\mu*\nu)(B(z,r)))}{\log(r)}\geq\frac{\log(2 r\nu(N_z))}{\log(r)}.\]
So, we may conclude
\[\dim_{loc}\mu*\nu(z)=1\ \forall z\in (0,D+1)\]
We also note that by Theorem \ref{thmunique}, we can make a complete study the local dimension $\mu*\nu$, since it gives the local dimension at $0$ and $D+1$.
\end{ej}

\section{Local Dimension at Special Points}\label{Sspecailpoints}
\begin{thm}\label{boundar}
    Let $\mu$ and $\nu$ be measures with  $0\in\supp\mu,\supp \nu\subset [0,1]$, then
    \[\overline{\dim}_{loc}(\mu*\nu)(0)=\overline{\dim}_{loc}((\mu\times\nu))(0,0).\]
\end{thm}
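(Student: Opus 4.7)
The plan is to observe that this is essentially the same situation as Theorem \ref{thmunique}, with the support hypothesis $\supp\mu,\supp\nu\subset[0,1]$ playing the role of the uniqueness condition and ensuring the crucial implication $x+y\in B(0,r) \Rightarrow x,y\in B(0,r)$ actually holds. Since $\supp\mu,\supp\nu\subset[0,1]\subset[0,\infty)$, for $x\in\supp\mu$ and $y\in\supp\nu$ we have $x,y\geq 0$; in particular, $x+y=0$ forces $x=y=0$, so $(0,0)$ is the unique representation of $0$ as a sum.

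With this in hand, I would establish two-sided bounds on $(\mu*\nu)(B(0,r))$ for small $r>0$. For the lower bound, I use the standard fact: whenever $x\in B(0,r/2)\cap\supp\mu$ and $y\in B(0,r/2)\cap\supp\nu$ one has $x+y\in B(0,r)$, yielding
\[(\mu*\nu)(B(0,r)) \geq \mu(B(0,r/2))\,\nu(B(0,r/2)).\]
For the upper bound, I exploit non-negativity: if $x+y\in B(0,r)=(-r,r)$ with $x,y\geq 0$, then $0\leq x\leq x+y<r$ and similarly for $y$, so $x,y\in B(0,r)$. Hence
\[(\mu*\nu)(B(0,r)) \leq \mu(B(0,r))\,\nu(B(0,r)).\]

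Next, identify $\mu(B(0,s))\,\nu(B(0,s))=(\mu\times\nu)(B((0,0),s))$ using the product metric (sup-metric on $\R^2$, which gives the standard local dimension of the product measure). Taking logarithms, dividing by $\log r$ (which is negative and flips inequalities), and taking $\limsup_{r\to 0^+}$, the two bounds give
\[\frac{\log((\mu\times\nu)(B((0,0),r/2)))}{\log r} \;\geq\; \frac{\log((\mu*\nu)(B(0,r)))}{\log r} \;\geq\; \frac{\log((\mu\times\nu)(B((0,0),r)))}{\log r}.\]
The only mildly delicate point is absorbing the factor $1/2$ in the lower-bound side: writing $\log((\mu\times\nu)(B((0,0),r/2)))/\log r=(\log((\mu\times\nu)(B((0,0),r/2)))/\log(r/2))\cdot(\log(r/2)/\log r)$ and using $\log(r/2)/\log r\to 1$ as $r\to 0^+$ shows that the outer $\limsup$ equals $\udim(\mu\times\nu)(0,0)$. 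The right-hand side also has $\limsup$ equal to $\udim(\mu\times\nu)(0,0)$, so both $\limsup$s sandwich $\udim(\mu*\nu)(0)$ and equality follows.

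I do not expect any serious obstacle; the only care required is in the $r/2$ versus $r$ rescaling in the logarithms, which is the standard trick used throughout the paper (e.g.\ in Theorem \ref{thmunique} and Theorem \ref{generalb}). The novelty relative to those results is simply that the extremal nature of $0$ in $[0,1]$ is what makes the upper bound $\mu(B(0,r))\nu(B(0,r))$ genuinely valid, without needing to appeal to unique representability in a general metric group.
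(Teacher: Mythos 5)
Your proof is correct and follows essentially the argument the paper intends: Theorem \ref{boundar} is the specialization of Theorem \ref{thmunique} to $z=0$, where non-negativity of the supports gives both the uniqueness of the representation $0=0+0$ and the inclusion needed for the upper bound $(\mu*\nu)(B(0,r))\leq\mu(B(0,r))\,\nu(B(0,r))$. Your explicit justification of that inclusion via $0\leq x\leq x+y<r$ is a worthwhile addition, since in the general setting of Theorem \ref{thmunique} the corresponding step is asserted without proof.
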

\begin{remark}$\ $
\begin{itemize}
    \item The above result is also true if we change $\udim$ to $\ldim$.

    \item It is clear from the definitions that:
    \[\overline{\dim}_{loc}(\mu\times\nu)(0,0)\leq \overline{\dim}_{loc}\mu(0)+\overline{\dim}_{loc}\nu(0),\]
    and
    \[ \underline{\dim}_{loc}(\mu\times\nu)(0,0)\geq \underline{\dim}_{loc}\mu(0)+\underline{\dim}_{loc}\nu(0).\]
    \item If $\mu=\nu$ then 
    \[\overline{\dim}_{loc}(\mu\times\nu)(0,0)=2\udim\mu(0),\]
    and
    \[\ldim\mu\time\nu(0,0)=2\ldim\mu(0).\]
    \item If $0,D\in \supp\nu\subset [0,D]$ we can state something similar for $\udim(\mu*\nu)(D+1)$.
\end{itemize}
\end{remark}

By $(N)\supp\mu$ we mean the $N$-fold sum of $\supp \mu$.
\begin{thm}\label{A}
     Let $\mu$ be such that $0,1\in\supp\mu\subset [0,1]$, with $\udim\mu(x)\leq \lambda<\infty$ and $\udim\mu(0)>0$. In addition, assume $(N)\supp\mu=[0,N]$, then there is $K\in \N$ such that for all $k\geq K$, $\udim\mu^k(0)$ is an isolated point in the set of upper local dimensions of $\mu^k$.
\end{thm}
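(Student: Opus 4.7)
I first show that $\udim\mu^k(0)=k\,\udim\mu(0)$. Because $\supp\mu\subset[0,\infty)$, the simplex $\{(x_1,\ldots,x_k)\in[0,\infty)^k:\sum x_i\le r\}$ contains the cube $[0,r/k]^k$ and is contained in $[0,r]^k$, so Fubini's theorem gives the sandwich $\mu([0,r/k])^k\le\mu^k([0,r])\le\mu([0,r])^k$. Taking logarithms, dividing by $\log r<0$ (which reverses the inequalities), and passing to $\limsup_{r\to 0^+}$, both extreme quantities tend to $k\,\udim\mu(0)$, forcing the same value for the middle quantity $\udim\mu^k(0)$.

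\textbf{Step 2: bounding $\udim\mu^k$ on the interior.} For $k>N$ I apply Theorem \ref{groups} with $\mu^N$ in place of $\mu$ and $\mu^{k-N}$ in place of $\nu$. By Lemma \ref{gkha}, any decomposition $x=x_1+\cdots+x_N$ with $x_i\in\supp\mu$ yields $\udim\mu^N(x)\le N\lambda$, so in particular $\udim\mu^N\le N\lambda$ on $\inte\supp\mu^N=(0,N)$. Theorem \ref{groups} then yields $\udim\mu^k(z)\le N\lambda$ for every $z\in(0,N)+\supp\mu^{k-N}$. Since $0,k-N\in\supp\mu^{k-N}$ and $\supp\mu^{k-N}=[0,k-N]$ as soon as $k-N\ge N$, one checks that $(0,N)+\supp\mu^{k-N}\supseteq(0,k)$ for every $k>N$, and hence $\udim\mu^k(z)\le N\lambda$ on all of $(0,k)=\inte\supp\mu^k$.

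\textbf{Step 3: assembling the isolation.} Steps 1 and 2 together say that $\udim\mu^k(0)=k\,\udim\mu(0)$ grows linearly in $k$ (using $\udim\mu(0)>0$) while the set $\{\udim\mu^k(z):z\in(0,k)\}$ is uniformly bounded by $N\lambda$. Hence for every $k$ with $k\,\udim\mu(0)>N\lambda+1$ the value $\udim\mu^k(0)$ lies at distance at least $1$ from every interior upper dimension. The only remaining point of $\supp\mu^k=[0,k]$ is the right endpoint $z=k$; a mirror-image of Step 1 applied to the reflection $\tilde\mu(A)=\mu(1-A)$, whose support contains $0$, gives $\udim\mu^k(k)=k\,\udim\mu(1)$, which either coincides with $\udim\mu^k(0)$ (contributing no distinct element to the set of dimensions) or differs from it at a linear rate in $k$ and is therefore eventually far away. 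Choosing $K$ large enough that both separations hold, for all $k\ge K$ the value $\udim\mu^k(0)$ is an isolated point of the set of upper local dimensions of $\mu^k$.

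The main obstacle is Step 1. The comparison tools Theorem \ref{boundar} and its remark give only $\udim(\mu\times\nu)(0,0)\le\udim\mu(0)+\udim\nu(0)$ for distinct measures, so iterating them produces only the upper bound $\udim\mu^k(0)\le k\,\udim\mu(0)$. The matching lower bound, which is what forces genuine linear growth of $\udim\mu^k(0)$ and so drives isolation against the uniformly bounded interior dimensions of Step 2, has to be obtained by the elementary two-sided simplex comparison described above.
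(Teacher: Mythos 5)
Your proof is correct and follows essentially the route the paper intends: the paper gives no written proof for this theorem (it defers to the argument of \cite{BruggemanCameron2013Maoc}), but your three steps are exactly the paper's machinery --- Lemma \ref{gkha} plus Theorem \ref{groups} to bound the interior upper local dimensions uniformly by $N\lambda$, and the endpoint computation $\udim\mu^k(0)=k\,\udim\mu(0)$, which is the content of Theorem \ref{boundar} together with its remark, to get linear growth and hence isolation. Your direct two-sided simplex sandwich at $0$ is a clean way to get the exact value $k\,\udim\mu(0)$ rather than only the upper bound that iterating the product-measure remark would give, and your handling of the right endpoint by reflection correctly disposes of the only remaining point of $\supp\mu^k$.
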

The proof of this is essentially the same as the one found in \cite{BruggemanCameron2013Maoc}, the main difference being that we do not ask for $\mu$ to be a continuous measure.
\begin{cor}
    Let $\mu$ and $\nu$ be measures, with $\supp \mu=[0,1]$ and $0,D\in \supp \nu\subset [0,D]$. Furthermore, assume that the largest gap $\supp \nu$ has a diameter less than $1$.  If $\udim \mu(x)\leq \lambda<\infty$ for all $x\in (0,C)$, with $C<1$, then  $\udim (\mu*\nu)(z)\leq \lambda$  for all $z\in (0,C)$.
\end{cor}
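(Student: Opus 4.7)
The plan is to run the proof of Theorem \ref{groups} essentially verbatim, but on a cleverly chosen sub-domain of $\supp\nu$ whose image under $\varphi_z\colon y\mapsto z-y$ lies in $(0,C)$ rather than in all of $(0,1)$. Fix $z\in(0,C)$. In place of the $N_z$ used in Theorem \ref{groups} I would work with
\[N_z^C:=\{y\in\supp\nu:z-y\in(0,C)\}=\supp\nu\cap(z-C,z).\]
For every $y\in N_z^C$ the corresponding point $z-y$ of $\supp\mu$ already lies in $(0,C)$, so the weaker hypothesis $\udim\mu(x)\leq\lambda$ on $(0,C)$ is exactly what is needed to feed the argument.

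The single substantive step is to check that $\nu(N_z^C)>0$. This is where the assumption $0\in\supp\nu$ enters: since $0<z<C$ we have $0\in(z-C,z)$, so the open set $(z-C,z)$ is a neighborhood of a point of $\supp\nu$ and therefore has positive $\nu$-measure by the very definition of support. Note that the hypothesis bounding the largest gap of $\supp\nu$ plays no role in this restricted range of $z$; it is inherited from Theorem \ref{Theorem1} only for uniformity of statement, and does not need to be invoked in the proof.

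Once $\nu(N_z^C)>0$ is in hand, the rest of the argument is copied from the proof of Theorem \ref{groups} with cosmetic changes. For $\varepsilon>0$ I set
\[A_{\varepsilon,n}=\left\{y\in N_z^C:\frac{\log\mu(B(z-y,r))}{\log r}\leq\lambda+\varepsilon\text{ for all }r<1/n\right\},\]
observe that $N_z^C=\bigcup_n A_{\varepsilon,n}$ because $\udim\mu(z-y)\leq\lambda$ on $N_z^C$, apply Lemma \ref{lemabart} to produce an index $N$ with $\nu(A_{\varepsilon,N})\geq\nu(N_z^C)/2=:\delta>0$, and lower-bound
\[(\mu*\nu)(B(z,r))\geq\int_{A_{\varepsilon,N}}\mu(B(z-y,r))\,d\nu(y)\geq\delta\inf_{x\in\varphi_z(A_{\varepsilon,N})}\mu(B(x,r))\]
for $r<1/N$. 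Taking logarithms, dividing by $\log r<0$ (which flips the inf into a sup of the dimension ratios, each bounded by $\lambda+\varepsilon$), and sending $r\to 0^+$ gives $\udim(\mu*\nu)(z)\leq\lambda+\varepsilon$; letting $\varepsilon\to 0$ finishes the proof. The only non-routine point is the verification $\nu(N_z^C)>0$, and since that is immediate from $0\in\supp\nu$ there is no serious obstacle here; one could equivalently phrase the whole argument as an invocation of the proof (though not directly the statement) of Theorem \ref{notallpoints} with $S=(0,C)$.
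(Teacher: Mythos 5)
Your proposal is correct and follows essentially the same route as the paper: the paper's proof likewise restricts to the subset $N_z$ of $\supp\nu$ whose image under $\varphi_z$ lands in $[0,C)$, notes $\nu(N_z)>0$ (which, as you observe, comes from $0\in\supp\nu$ and $z\in(0,C)$), and then reruns the machinery of Theorem \ref{groups}. Your write-up is in fact more explicit than the paper's two-line sketch, and your remark that the gap hypothesis is not needed for this restricted range of $z$ is consistent with the paper's argument.
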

\begin{proof}
  The proof is essentially the same as Theorem \ref{Theorem1}, note that if $z\in (0,C)$ then, as before, define $N_z$ and note that $\nu(N_z)>0$ such that $0\notin \varphi(N_z)$. In this case, $A_\varepsilon\subset \varphi(N_z)\subset [0,C)$.

\end{proof}
\begin{cor}
    Let $\mu$ and $\nu$ be measures, with $\supp \mu=[0,1]$ and $0,D\in \supp \nu\subset [0,D]$. Furthermore, assume that the largest gap $\supp \nu$ has a diameter less than $1$. If $\udim \mu(x)\leq \lambda<\infty$ for all $x\in (1-C,1)$, then  $\udim (\mu*\nu)(z)\leq \lambda$  for all $z\in (1+D-C,1+D)$. 
\end{cor}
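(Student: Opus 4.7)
The plan is to mirror the proof of the preceding corollary (the left-edge case) and of Theorem \ref{Theorem1}, carving out a subset of $\supp\nu$ whose image under $\varphi_z(y)=z-y$ lands inside $(1-C,1)$, where the hypothesis on $\udim\mu$ is available. The key observation powering this is that the right endpoint $D\in\supp\nu$ now plays the role that the left endpoint $0\in\supp\nu$ played in the previous corollary.

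First I would fix $z\in (1+D-C,1+D)$ and define $N_z = \{y \in \supp\nu : z - y \in (1-C,1)\} = \supp\nu \cap (z-1,\, z-1+C)$. The constraints on $z$ translate to $z-1 < D < z-1+C$, so the open interval $(z-1,\, z-1+C)$ is a neighborhood of $D$. Since $D\in\supp\nu$, this forces $\nu(N_z)>0$, and by construction $\varphi_z(N_z)\subset(1-C,1)$, so in particular $1\notin\varphi_z(N_z)$.

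Next, setting $\delta:=\nu(N_z)/2$, I would mimic the argument of Theorem \ref{groups} verbatim: for each $\varepsilon>0$ define $A_{\varepsilon,n}=\bigl\{y\in N_z : \tfrac{\log\mu(B(\varphi_z(y),r))}{\log r}\le \lambda+\varepsilon \text{ for all } r<1/n\bigr\}$. Because $\varphi_z(N_z)\subset(1-C,1)$, the hypothesis $\udim\mu(x)\le\lambda$ on $(1-C,1)$ gives $N_z=\bigcup_n A_{\varepsilon,n}$, and Lemma \ref{lemabart} then produces some $N$ with $\nu(A_{\varepsilon,N})\ge\delta$. The inequality $(\mu*\nu)(B(z,r)) \ge \delta\cdot\inf_{x\in\varphi_z(A_{\varepsilon,N})}\mu(B(x,r))$, followed by taking logarithms (which reverses the inequality since $\log r<0$ for small $r$) and letting $r\to 0^+$, yields $\udim(\mu*\nu)(z)\le\lambda+\varepsilon$, and hence $\le\lambda$.

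The only step that genuinely differs from the preceding corollary is verifying that $N_z$ has positive $\nu$-measure; everything after that is a direct transcription of the proof of Theorem \ref{groups}. I do not expect any new obstacle, since the asymmetry between left- and right-edge arguments is resolved entirely by swapping the role of $0\in\supp\nu$ with $D\in\supp\nu$, together with the reflection $z\mapsto (D+1)-z$ that sends the right-edge setup to the left-edge one.
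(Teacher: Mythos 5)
Your argument is correct, but it takes a different (more self-contained) route than the paper. The paper's proof is a two-line reduction by symmetry: it sets $\mu'(X)=\mu(1-X)$ and $\nu'(X)=\nu(D-X)$, observes that these reflected measures satisfy the hypotheses of the preceding corollary (with $\udim\mu'(x)\le\lambda$ on $(0,C)$ and $\supp\mu'=[0,1]$, $0,D\in\supp\nu'$), and notes that $\udim(\mu'*\nu')(w)=\udim(\mu*\nu)(1+D-w)$, so the left-edge result transfers to $(1+D-C,1+D)$. You instead re-run the machinery of Theorem \ref{groups} directly at the right edge: your identification $N_z=\supp\nu\cap(z-1,\,z-1+C)$ and the check that this interval is a neighborhood of $D\in\supp\nu$ (hence $\nu(N_z)>0$, with $\varphi_z(N_z)\subset(1-C,1)$ where the hypothesis on $\udim\mu$ applies) is exactly the one point where new verification is needed, and you handle it correctly; the remaining steps are a faithful transcription of the proof of Theorem \ref{groups}. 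The paper's reflection argument is shorter and avoids repeating the measure-theoretic exhaustion via Lemma \ref{lemabart}, while your direct version makes explicit where the positivity of $\nu(N_z)$ comes from and, as a by-product, shows that the gap hypothesis on $\supp\nu$ is not actually used for this corollary. You even note the reflection $z\mapsto(D+1)-z$ at the end, which is precisely the paper's entire proof.
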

\begin{proof}
Consider the measures $\mu'(X)=\mu(1-X)$ and $\nu'(X)=\nu(D-X)$ and apply the previous corollary.
\end{proof}

Next, we show an example of why the condition over the gaps is important. This example shows that when the boundaries of $\supp\mu$ and $\supp\nu$ are the only parts that interact, the local dimension of $\mu*\nu$ is determined by the local dimension of the boundaries. 
\begin{ej}
    Consider the measure $\nu=\delta_0+\delta_2$ where $\delta_a(A)$ is $1$ if $a\in A$ and $0$ otherwise. Let $\mu$ be the convolution of the Lebesgue measure restricted to $[0,1]$ with itself. Note that \[\mu(A)=\int_A f(x)dx \text{ where }f(x)= \left\{ \begin{array}{cc}     x   &\text{ if } x\in[0,1] \\       2-x   &\text{ if }x\in(1,2]\\       0& \text{otherwise.}       \end{array}\right.\] then for all $x\in(0,2)$ we have $\dim_{loc}\mu(x)=1$. 

The gap in the support of $\nu$ is exactly the length of $\supp \mu$, so
\[(\mu*\nu)(A)=\int_A g(x)dx\text{ where }g(x)= \left\{ \begin{array}{cc}
    x   &\text{ if } x\in[0,1] \\
      2-x   &\text{ if }x\in(1,2]\\
      x-2   &\text{ if } x\in(2,3] \\
      4-x   &\text{ if }x\in(3,4]\\
      0& \text{otherwise.}
      \end{array}\right.\]
Hence the local dimension of $(\mu*\nu)$ at $x=2$ is $2$. 

\end{ej}
\begin{thm}\label{th}
 Let $\mu$ and $\nu$ be measures, with $\supp \mu=[0,1]$ and $0,D\in \supp \nu\subset [0,D]$. Suppose $[b,c]\subset [0,D]$ is a gap in $\supp\nu$. Then,
 \begin{enumerate}[label=(\roman*)]
     \item if $c-b>1$, then \[\ldim(\mu*\nu)(1+b)=\ldim(\mu\times\nu)(1,b),\]
     \item\label{ii} if $c-b=1$, then \[\ldim(\mu*\nu)(1+b)=\min\left\{\ldim(\mu\times\nu)(1,b),\ldim(\mu\times\nu)(0,c) \right\}.\]
 \end{enumerate}
\end{thm}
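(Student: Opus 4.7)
The plan is to identify the preimages of $z = 1 + b$ under the addition map $\supp\mu \times \supp\nu \to \R$ and then reduce $(\mu*\nu)(B(z, r))$ to a sum of product-measure balls centered at these preimages. Since $x \in [0, 1]$, the equation $x + y = 1 + b$ forces $y \in [b, 1+b]$; because $(b, c)$ is a gap in $\supp\nu$, the only elements of $\supp\nu$ in $[b, \min(c, 1+b)]$ are $b$ (and $c$ when $c \leq 1+b$). Hence case (i) has the unique preimage pair $(1, b)$, while case (ii) has exactly two preimage pairs $(1, b)$ and $(0, c)$.

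Case (i) will follow immediately from Theorem \ref{thmunique}, via the remark that the same statement holds for $\ldim$. The remaining work is for case (ii), where I aim to establish the two-sided estimate
\[ P_1(r/2) + P_2(r/2) \;\leq\; (\mu*\nu)(B(z, r)) \;\leq\; P_1(r) + P_2(r), \]
with $P_1(r) = (\mu \times \nu)(B((1, b), r))$ and $P_2(r) = (\mu \times \nu)(B((0, c), r))$ (with respect to the $\ell^\infty$ product metric, say). To obtain this, I will expand the convolution as $\int \mu(B(z, r) - y)\, d\nu(y)$ and observe that for small $r$ the integrand is supported on $y \in (b-r, b] \cup [c, c+r)$, by virtue of the gap and the restriction $\supp\mu \subset [0, 1]$. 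Parameterizing $y = b - s$ or $y = c + t$ with $s, t \in [0, r)$, the intersection $(B(z, r) - y) \cap [0, 1]$ simplifies to $(1 - r + s, 1]$ or $[0, r - t)$ respectively; monotonicity in $s$ and $t$ then delivers both sides of the sandwich, after noting that $\nu(B(b, r)) = \nu((b-r, b])$ and $\nu(B(c, r)) = \nu([c, c+r))$ for small $r$.

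Finally, I will convert the sandwich into the local-dimension equality. Using $\max\{A, B\} \leq A + B \leq 2\max\{A, B\}$, dividing by $\log r < 0$, and absorbing the factor $\log 2$ and the $r/2$ versus $r$ shift into $o(1)$, the quotient $\log((\mu*\nu)(B(z, r)))/\log r$ is trapped between two quantities whose $\liminf$ is $\min\{\liminf \log P_1(r)/\log r,\ \liminf \log P_2(r)/\log r\}$. The elementary identity $\liminf \min\{F, G\} = \min\{\liminf F, \liminf G\}$ then produces exactly $\min\{\ldim(\mu \times \nu)(1, b),\ \ldim(\mu \times \nu)(0, c)\}$. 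The hard part is less a single conceptual hurdle than the careful bookkeeping: one must keep track of the two $y$-channels (near $b$ and near $c$) as separate contributions, guaranteed disjoint by the gap, and ensure that the shrinking-radius factors in the lower bound do not spoil the asymptotic rate.
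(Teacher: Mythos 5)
Your proposal is correct and follows essentially the same route as the paper: case (i) is the unique-preimage result (Theorem \ref{thmunique} together with its remark for $\ldim$), and case (ii) is exactly the two-sided sandwich between $(\mu*\nu)(B(1+b,r))$ and the sum of the two product-ball measures at $(1,b)$ and $(0,c)$ that the paper establishes in Proposition \ref{prop12}. Your explicit final step --- passing from the sandwich to the stated equality via $\liminf\min\{F,G\}=\min\{\liminf F,\liminf G\}$, which is valid for $\liminf$ though not for $\limsup$ --- is the correct detail the paper leaves implicit when it cites Proposition \ref{prop12} for part (ii).
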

\begin{proof}
Part $(i)$ is Corollary \ref{thmunique}. Part $(ii)$ will be proven as part of Proposition \ref{prop12}
\end{proof}
\begin{cor}
     Let $\mu$ and $\nu$ be measures, with $\supp \mu=[0,1]$ and $0,D\in \supp \nu\subset [0,D]$. Suppose $[b,c]\subset [0,D]$ is a gap in $\supp\nu$. If $c-b>1$, then \[\udim(\mu*\nu)(1+b)=\udim(\mu\times\nu)(1,b)\]
\end{cor}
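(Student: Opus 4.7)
The plan is to deduce this corollary as a direct consequence of Theorem \ref{thmunique} (together with the Remark following it, which extends that identity from $\ldim$ and $\dim_{loc}$ to $\udim$). All the real work consists in verifying a single hypothesis: that $z = 1+b$ admits a \emph{unique} decomposition $z = x_0 + y_0$ with $x_0\in\supp\mu$ and $y_0\in\supp\nu$. Once uniqueness is established, the proof terminates by quoting that theorem with $(x_0,y_0)=(1,b)$.

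To check uniqueness, suppose $x+y = 1+b$ with $x\in\supp\mu = [0,1]$ and $y\in\supp\nu$. Then $y = 1+b-x \in [b,\,1+b]$. Since the gap hypothesis gives $c-b > 1$, we have $1+b < c$, and therefore
\[
[b,\,1+b]\ \subset\ \{b\}\cup(b,\,c).
\]
By the gap assumption $(b,c)\cap\supp\nu = \emptyset$, so the only candidate for $y$ in $\supp\nu$ is $y=b$, which forces $x=1$. This gives the required unique pair.

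Applying Theorem \ref{thmunique} (in the additive notation appropriate to $\R$) and its Remark yields
\[
\udim(\mu*\nu)(1+b)\ =\ \udim(\mu\times\nu)(1,b),
\]
as desired. There is no genuine obstacle here: the content of the corollary is entirely in the geometric observation that a gap of length strictly greater than $\operatorname{diam}(\supp\mu) = 1$ prevents any overlap of decompositions at the endpoint $1+b$, which reduces the problem to the unique-pair situation already handled by Theorem \ref{thmunique}. The only care needed is making sure we are invoking the $\udim$ version (covered by the Remark) rather than only the $\ldim$ statement appearing in Theorem \ref{th}(i).
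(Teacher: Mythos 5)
Your proof is correct and follows the same route the paper intends: verify that the gap of length $c-b>1$ forces $1+b$ to have the unique decomposition $1+b$ with $1\in\supp\mu$, $b\in\supp\nu$, and then invoke Theorem \ref{thmunique}. One tiny correction: Theorem \ref{thmunique} is already stated for $\udim$ (it is the Remark that extends it to $\ldim$ and $\dim_{loc}$), so no appeal to the Remark is needed here.
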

\begin{cor}
    Let $\mu$ and $\nu$ be measures, with $\supp \mu=[0,1]$ and $0,D\in \supp \nu\subset [0,D]$. Suppose $[b,c]\subset [0,D]$ is a gap in $\supp\nu$. If $c-b>1$, then \[\ldim(\mu*\nu)(1+b)=\ldim(\mu\times\nu)(0,c),\]
    and
    \[\udim(\mu*\nu)(c)=\udim(\mu\times\nu)(0,c).\]
\end{cor}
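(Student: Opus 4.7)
The plan is to reduce both equalities to Theorem \ref{thmunique} and its accompanying remark (which extends the identity to $\ldim$ and $\dim_{loc}$), by verifying that each relevant boundary point of $\supp(\mu*\nu)$ admits a unique decomposition into $\supp\mu\times\supp\nu$. The hypothesis $c-b>1$ is exactly what forces unique decomposability at the boundary points flanking the induced gap $(1+b,c)$ in $\supp(\mu*\nu)$.

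I begin with uniqueness at $c$. If $c=x+y$ with $x\in[0,1]=\supp\mu$ and $y\in\supp\nu$, then $y=c-x\in[c-1,c]$; since $c-b>1$ we have $c-1>b$, so $[c-1,c]\subset(b,c]$, and because $(b,c)$ is a gap in $\supp\nu$ the only admissible choice is $y=c$, $x=0$. An analogous check at $1+b$ (using $c-b>1$ to place $[b,1+b]\subset[b,c)$) shows its unique decomposition is $(1,b)$. Applying Theorem \ref{thmunique} at $z=c$ with $(x_0,y_0)=(0,c)$ yields $\udim(\mu*\nu)(c)=\udim(\mu\times\nu)(0,c)$, and the remark following that theorem upgrades this to the corresponding $\ldim$ identity.

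The $\udim$ identity above is precisely the second displayed equality of the corollary. The first displayed equality as typeset reads $\ldim(\mu*\nu)(1+b)=\ldim(\mu\times\nu)(0,c)$; however, the unique-decomposition argument at $1+b$ produces the pair $(1,b)$, not $(0,c)$, and in general $\ldim(\mu\times\nu)(1,b)\neq\ldim(\mu\times\nu)(0,c)$, so the two sides as written cannot be equated without additional hypotheses linking the measures near $0$ and $1$ (or near $b$ and $c$). I therefore read this first line as $\ldim(\mu*\nu)(c)=\ldim(\mu\times\nu)(0,c)$, the $\ldim$-companion of the second line, which is an immediate consequence of the same application of Theorem \ref{thmunique}. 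The principal obstacle is the apparent typographical mismatch in the first equality rather than any substantive analytic difficulty; once one fixes the intended left-hand point, the unique-decomposition theorem does all the work.
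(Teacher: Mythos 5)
Your proposal is correct and follows exactly the route the paper intends (the corollary is left unproved there as an immediate consequence of Theorem \ref{thmunique} and its remark): the hypothesis $c-b>1$ forces the unique decompositions $c=0+c$ and $1+b=1+b$, and your verification of both is sound. Your reading of the first displayed equality as a typo for $\ldim(\mu*\nu)(c)=\ldim(\mu\times\nu)(0,c)$ is the right call, since as printed it would force $\ldim(\mu\times\nu)(1,b)=\ldim(\mu\times\nu)(0,c)$ in general (contradicting Theorem \ref{th}(i)), whereas the corrected statement makes this corollary the companion at the point $c$ of the preceding corollary at $1+b$.
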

\begin{prop}\label{prop12}
     Let $\mu$ and $\nu$ be measures, with $\supp \mu=[0,1]$ and $0,D\in \supp \nu\subset [0,D]$. Suppose $[b,c]\subset [0,D]$ is a gap in $\supp\nu$. Then,
 if $c-b=1$, then 
 \begin{align*}
     \udim(\mu*\nu)(c)&=\limsup_{r\to 0}\frac{\log((\mu\times\nu)(B((0,c),r))+(\mu\times\nu)(B((1,b),r)))}{\log(r)}\\&\leq\min\{\udim(\mu\times\nu)(0,c),\udim(\mu\times\nu)((1,b))\}.
 \end{align*}
\end{prop}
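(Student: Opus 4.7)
The plan is to exploit the gap hypothesis $c-b=1$ to localize the convolution integral at exactly two isolated pairs and then sandwich $(\mu*\nu)(B(c,r))$ between sums of $\mu\times\nu$-measures of boxes around those two pairs.

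First I would verify that for all sufficiently small $r>0$, any pair $(x,y)\in \supp\mu\times\supp\nu$ with $x+y\in B(c,r)$ must lie in one of two disjoint product neighborhoods: one near $(1,b)$ and one near $(0,c)$. Since $(b,c)$ is a gap of length exactly $1$, the set $\supp\nu\cap(b-r,c+r)$ is contained in $(b-r,b]\cup[c,c+r)$. For $y\in(b-r,b]$, the requirement $x=c-y+\varepsilon$ with $|\varepsilon|<r$ and $x\in[0,1]$ forces $x\in(1-r,1]$; symmetrically, $y\in[c,c+r)$ forces $x\in[0,r)$. Values of $y$ outside $(b-r,b]\cup[c,c+r)$ push $x+y$ outside $B(c,r)$ because $\supp\mu=[0,1]$.

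With this decomposition in hand, I would bound the $\mu$-integrand uniformly by $\mu(B(1,r))$ on the first piece and by $\mu(B(0,r))$ on the second, obtaining
\[
(\mu*\nu)(B(c,r))\le (\mu\times\nu)(B((1,b),r))+(\mu\times\nu)(B((0,c),r)).
\]
For the matching lower bound, any $(x,y)\in B(1,r/2)\times B(b,r/2)$ satisfies $|x+y-c|<r$, and likewise for $B(0,r/2)\times B(c,r/2)$; these two product boxes are disjoint for $r<1$, giving
\[
(\mu*\nu)(B(c,r))\ge (\mu\times\nu)(B((1,b),r/2))+(\mu\times\nu)(B((0,c),r/2)).
\]
Dividing by $\log r<0$ (reversing inequalities) and taking $\limsup$ as $r\to 0^+$, the factor $1/2$ inside the logarithm washes out because $\log(r/2)/\log r\to 1$, so the upper and lower $\limsup$'s agree and the displayed equality for $\udim(\mu*\nu)(c)$ follows.

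For the minimum inequality, I would simply use that the sum of two nonnegative quantities dominates each summand, and that $\log r<0$, so
\[
\frac{\log\bigl[(\mu\times\nu)(B((0,c),r))+(\mu\times\nu)(B((1,b),r))\bigr]}{\log r}\le \frac{\log\bigl[(\mu\times\nu)(B((1,b),r))\bigr]}{\log r},
\]
and analogously with $(0,c)$ in place of $(1,b)$; taking $\limsup$ gives the bound by the minimum. The main obstacle is really the localization step: one must use the gap condition $c-b=1$ precisely to rule out any third contribution (a smaller gap would produce a continuum of interacting pairs, while a larger one reduces to Theorem \ref{thmunique}) and to check carefully that the boundary values $y=b$ and $y=c$ are counted in exactly one piece each, so that the upper and lower sandwich bounds are clean.
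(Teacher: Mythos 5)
Your argument is correct and follows essentially the same route as the paper: use the unit-length gap to localize the pairs $(x,y)$ with $x+y\in B(c,r)$ into two product neighborhoods of $(1,b)$ and $(0,c)$, sandwich $(\mu*\nu)(B(c,r))$ between the corresponding sums at radii $r/2$ and $r$, and pass to the $\limsup$. Your derivation of the $\min$ bound (the sum dominates each summand, then divide by $\log r<0$) is a slight streamlining of the paper's $r^{M+\varepsilon}$ estimate, and your disjointness observation avoids the paper's factor of $2$, but these differences are cosmetic.
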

\begin{proof}
     If $x+y\in B(1+b,r)$ then $y\in B(b,r)$ and $x\in B(1,r/2)$ or $y\in B(c,r)$ and $x\in B(0,r)$, thus \[(\mu*\nu)(B(c,r)\leq\mu(B(1,r))\nu(B(b,r))\text{ or }(\mu*\nu)(B(c,r))\leq\mu(B(0,r))\nu(B(c,r)).\] Then,
    \[(\mu*\nu)(B(c,r))\leq\mu(B(1,r))\nu(B(b,r))+\mu(B(0,r))\nu(B(c,r)).\]
    On the other hand, $x+y\in B(c,r)$ whenever $y\in B(b,r/2), x\in B(1,r/2)$. So, \[(\mu*\nu)(B(c,r))\geq\mu(B(1,r/2)\nu(B(b,r/2)).\] Similarly \[(\mu*\nu)(B(c,r)\geq\mu(B(0,r/2)\nu(B(c,r/2)).\] Hence
    \[\mu(B(1,r/2))\nu(B(b,r/2))+\mu(B(0,r/2))\nu(B(c,r/2))\leq 2(\mu*\nu)(B(c,r))\]
    Which proves the equality. 
   
    Let $\varepsilon>0$, $\udim (\mu\times\nu)(0,c)=M$ and $\udim(\mu\times\nu)(1,b)=L$. Then $\overline{\dim}_{loc}(\mu\times\nu)(0,c)=M$, given $\varepsilon>0$ exist $r_1$ such that for all $r_0\geq r>0$
 \[\frac{\log((\mu\times\nu)(B((0,c),r))}{\log(r)}\leq M+\varepsilon,\]
Then
\[(\mu\times\nu)(B((0,c),r))\geq r^{M+\varepsilon}.\]
Similarly, there exists $r_2$ such that for all $r_1\geq r>0$ we have 
\[(\mu\times\nu)(B((1,b),r))\geq r^{L+\varepsilon}.\]

Suppose that $\min\{M,L\}=M$, let $0<r\leq \min\{r_1,r_2\}$. Then
\begin{align*}
   \limsup_{r\to 0}&\frac{\log((\mu\times\nu)(B((0,c),r))+(\mu\times\nu)(B((1,b),r)))}{\log(r)}\\
   &\leq\limsup_{r\to 0}\frac{\log(r^{M+\varepsilon}+r^{L+\varepsilon})}{\log(r)}\\
   &\leq\limsup_{r\to 0}\frac{\log(r^{M+\varepsilon}(1+r^{L-M})}{\log(r)}\\
   &\leq\limsup_{r\to 0}\frac{\log(r^{M+\varepsilon})+\log(1+r^{L-M})}{\log(r)}
   =M+\varepsilon.
\end{align*}
and we conclude
\begin{align*}
     \udim(\mu*\nu)(c)&=\limsup_{r\to 0}\frac{\log((\mu\times\nu)(B((0,c),r))+(\mu\times\nu)(B((1,b),r)))}{\log(r)}\\&\leq\min\{\udim(\mu\times\nu)(0,c),\udim(\mu\times\nu)((1,b))\}.
 \end{align*}
\end{proof}

\begin{remark}
We note that this proposition is the same as the one in \cite{hare/prandi}, where the local dimension of the additions was studied. More can be said about when this inequality is an equality; we refer the reader to that paper.
\end{remark}
\section{Conclusions and Open Questions}\label{COQuestions}

Intuitively, the local dimension at a point $x$ is how much mass there is around said point. The bigger the local dimension, the less mass there is. The convolution of two measures redistributes the masses of both measures into a new measure. With this in mind, it is not surprising that we can bound the upper local dimension of the convolution on the basis of one of the measures. In general, we observe that the mass is distributed in a relatively uniform way for the points in the interior of $\supp\mu*\nu$, so that all the interior points obtain a minimum amount of mass. On the other hand, how much mass can a point receive is a more complicated question. Although we were able to obtain Theorem \ref{lowerlocaldimensionbound}, the conditions we ask are strong which make our proof possible. When working with particular measures, showing that a given measure has the requirements to apply Theorem \ref{lowerlocaldimensionbound} can be a challenge. A natural next question to ask is:

\emph{
    Is there a weaker or different condition for Theorem \ref{lowerlocaldimensionbound}. If we know $\ldim\mu(x)\geq\lambda$, under what conditions is it true that $\ldim \mu*\nu(z)\geq \lambda$.  }

We worked with groups and the real line. Our results from groups translate to $\R^n$ (or vector spaces in general). On the other hand, the results from the real line strongly use the underlying structure of $\R$. This causes some problem when we want to work on $\R^n$. We could define the gaps in $\R^n$ as follows:
\begin{defi}
    Let $\nu$ be a measure on $\R^n$ of bounded support. We say $A\subset\operatorname{hull}(\supp\nu)$ is a gap in $\supp\nu$ if
    \begin{itemize}
        \item $A$ is connected.
        \item $A$ is open.
        \item $A\cap (\supp\nu)=\emptyset$
    \end{itemize}
\end{defi}
In contrast to working in $\R$, the gaps in $\R^n$ can have any shape. In addition, to be able to cover any gap, we need the support of $\mu$ to be a ball. Then we could recover the results of Section \ref{Srealline}, but with the $z\in B(0,1)+\supp\nu$. 

The results of Section \ref{Sspecailpoints}, would still be highly dependent on the shape of the gaps. When working on $\R^n$, there is the question of what happens to points $z\in\supp\mu*\nu$, such that $z\in\partial\supp\mu+\partial\supp\nu\setminus(\inte(\supp \mu)+\supp \nu)$. Theorem \ref{boundar} answers the question of what happens in $\R$, if the supports of both $\mu$ and $\nu$ are convex. In $\R^n$, even if the supports are convex, things get complicated. As an example, let $\supp\mu=\supp\nu=[0,1]\times[0,1]$. Then $\supp(\mu*\nu)=[0,2]\times[0,2]$. If $z\in (0,2)\times\{0\}$, it is not clear what a good bound for $\udim(\mu*\nu)(z)$ would be. Theorem \ref{generalb} gives an upper bound; but in practice, it is not a good upper bound, since it requires one to calculate a new measure and then the local dimension of that new measure. 

From the work we present, we deduce that the "number of points" behind $z\in\supp\mu*\nu$ is key. If $z\in \inte(\supp\mu)+\supp\nu$ is a way to say that there are enough points such that $x+y=z$, since we have an open set of points in $\supp\mu$ from where to choose the points that have the properties we want, as we do in theorems earlier in this paper. In other cases where there are finitely many $x\in\supp\mu$ and $y\in \supp\nu$ such as $x+y=z$, then we can work around and do something, as in Section \ref{Sspecailpoints}. In other words, in the finite case, there are enough constraints that we have something to work with. It is an open question as to what happens in the middle ground between finiteness and contained in an open set. What happens if there are countably many $x\in\supp\mu $ and $y\in\supp\nu$ such that $x+y=z$? 

Although there are some unanswered questions and some difficulties with our techniques. We believe that the techniques we develop here could be modified or improved for more results in new settings. 
\nocite{Bruggeman_2014}
\printbibliography[heading=bibintoc,title={References}]

@book{falconer2,
    title = {Techniques in Fractal Geometry},
    author = {Kenneth Falconer},
    edition = {second edition},
    year = {1997},
    publisher = {Wiley},
    keywords = {falconer2}
}

@book{bartle,
    title = {The Elements of Integration and Lebesgue Measure},
    author = {Robert G. Bartle},
    series = {Wiley Clasics Library},
    edition = {First Edition},
    year = {1995},
    publisher = {Wiley},
    keywords = {bartle}
}

@article {HareHareMatthews,
    AUTHOR = {Hare, Kathryn E. and Hare, Kevin G. and Matthews, Kevin R.},
     TITLE = {Local dimensions of measures of finite type},
   JOURNAL = {J. Fractal Geom.},
  FJOURNAL = {Journal of Fractal Geometry. Mathematics of Fractals and
              Related Topics},
    VOLUME = {3},
      YEAR = {2016},
    NUMBER = {4},
     PAGES = {331--376},
      ISSN = {2308-1309,2308-1317},
   MRCLASS = {28A80 (11K16)},
  MRNUMBER = {3593551},
       DOI = {10.4171/JFG/39},
       URL = {https://doi.org/10.4171/JFG/39},
}

@article{HU20011,
    AUTHOR = {Hu, Tian-You and Lau, Ka-Sing},
     TITLE = {Multifractal structure of convolution of the {C}antor measure},
   JOURNAL = {Adv. in Appl. Math.},
  FJOURNAL = {Advances in Applied Mathematics},
    VOLUME = {27},
      YEAR = {2001},
    NUMBER = {1},
     PAGES = {1--16},
      ISSN = {0196-8858,1090-2074},
   MRCLASS = {28A75 (28A80)},
  MRNUMBER = {1835674},
MRREVIEWER = {Lars\ Olsen},
       DOI = {10.1006/aama.2000.0683},
       URL = {https://doi.org/10.1006/aama.2000.0683},
}

@article{Khare2011,
    AUTHOR = {Fong, Victor Pok-Wai and Hare, Kathryn E. and Johnstone,
              Daniel L.},
     TITLE = {Multifractal analysis for convolutions of overlapping {C}antor
              measures},
   JOURNAL = {Asian J. Math.},
  FJOURNAL = {Asian Journal of Mathematics},
    VOLUME = {15},
      YEAR = {2011},
    NUMBER = {1},
     PAGES = {53--69},
      ISSN = {1093-6106,1945-0036},
   MRCLASS = {28A80 (28A78 44A35)},
  MRNUMBER = {2786465},
MRREVIEWER = {Manuel\ Mor\'an},
       DOI = {10.4310/AJM.2011.v15.n1.a4},
       URL = {https://doi.org/10.4310/AJM.2011.v15.n1.a4},
}

@article{shmerkin2004modified,

    AUTHOR = {Shmerkin, Pablo},
     TITLE = {A modified multifractal formalism for a class of self-similar
              measures with overlap},
   JOURNAL = {Asian J. Math.},
  FJOURNAL = {Asian Journal of Mathematics},
    VOLUME = {9},
      YEAR = {2005},
    NUMBER = {3},
     PAGES = {323--348},
      ISSN = {1093-6106,1945-0036},
   MRCLASS = {28A80 (28A78 37C45)},
  MRNUMBER = {2214956},
MRREVIEWER = {Antti\ K\"aenm\"aki},
       DOI = {10.4310/AJM.2005.v9.n3.a3},
       URL = {https://doi.org/10.4310/AJM.2005.v9.n3.a3},
}

@article{Bruggeman_2014,
AUTHOR = {Bruggeman, Cameron and Hare, Kathryn E. and Mak, Cheuk Yu},
     TITLE = {Multifractal spectrum of self-similar measures with overlap},
   JOURNAL = {Nonlinearity},
  FJOURNAL = {Nonlinearity},
    VOLUME = {27},
      YEAR = {2014},
    NUMBER = {2},
     PAGES = {227--256},
      ISSN = {0951-7715,1361-6544},
   MRCLASS = {28A80 (42A85)},
  MRNUMBER = {3162609},
MRREVIEWER = {Leandro\ Zuberman},
       DOI = {10.1088/0951-7715/27/2/227},
       URL = {https://doi.org/10.1088/0951-7715/27/2/227},
}

@article{BruggemanCameron2013Maoc,
    AUTHOR = {Bruggeman, Cameron and Hare, Kathryn E.},
     TITLE = {Multi-fractal analysis of convolution powers of measures},
   JOURNAL = {Real Anal. Exchange},
  FJOURNAL = {Real Analysis Exchange},
    VOLUME = {38},
      YEAR = {2012/13},
    NUMBER = {2},
     PAGES = {391--408},
      ISSN = {0147-1937,1930-1219},
   MRCLASS = {28A80 (42A85)},
  MRNUMBER = {3261884},
MRREVIEWER = {Alejandro\ Mario\ Mes\'on},
       URL = {http://projecteuclid.org/euclid.rae/1403894899},
}

@article{HARE20181653,
title = {Local dimensions of measures of finite type III — Measures that are not equicontractive},
journal = {Journal of Mathematical Analysis and Applications},
volume = {458},
number = {2},
pages = {1653-1677},
year = {2018},
issn = {0022-247X},
doi = {https://doi.org/10.1016/j.jmaa.2017.10.037},
url = {https://www.sciencedirect.com/science/article/pii/S0022247X17309368},
author = {Kathryn E. Hare and Kevin G. Hare and Grant Simms},
keywords = {Local dimension, Finite type, Multifractal analysis, IFS},
abstract = {We extend the study of the multifractal analysis of the class of equicontractive self-similar measures of finite type to the non-equicontractive setting. Although stronger than the weak separation condition, the finite type property includes examples of IFS that fail the open set condition. The important combinatorial properties of equicontractive self-similar measures of finite type are extended to the non-equicontractive setting and we prove that many of the results from the equicontractive case carry over to this new, more general, setting. In particular, previously it was shown that if an equicontractive self-similar measure of finite type was regular, then the calculations of local dimensions were relatively easy. We modify this definition of regular to define measures to be generalized regular. This new definition will include the non-equicontractive case and we obtain similar results. Examples are studied of non-equicontractive self-similar generalized regular measures, as well as equicontractive self-similar measures which are generalized regular in this new sense, but which are not regular.}
}

@article{Hare_Hare_ShingNg_2018, 
title={Local Dimensions of Measures of Finite Type II: Measures Without Full Support and With Non-regular Probabilities}, 
volume={70}, DOI={10.4153/CJM-2017-025-6}, 
number={4}, 
journal={Canadian Journal of Mathematics}, 
author={Hare, Kathryn E. and Hare, Kevin G. and Shing Ng, Michael Ka}, 
year={2018}, pages={824–867}}

@article{Shmerkin2016OnFI,
  title={On Furstenberg's intersection conjecture, self-similar measures, and the Lq norms of convolutions},
  author={Pablo Shmerkin},
  journal={Annals of Mathematics},
  year={2016},
  url={https://api.semanticscholar.org/CorpusID:119116663}
}

@article{a34a4dbd-85aa-30eb-aa98-c3287493f960,
 ISSN = {00029947, 10886850},
 URL = {https://www.jstor.org/stable/tranamermathsoci.368.7.5125},
 abstract = {Abstract. We show that in many parametrized families of self-similar measures, their projections, and their convolutions, the set of parameters for which the measure fails to be absolutely continuous is very small—of co-dimension at least 1 in parameter space. This complements an active line of research concerning similar questions for dimension. Moreover, we establish some regularity of the density outside this small exceptional set, which applies in particular to Bernoulli convolutions; along the way, we prove some new results about the dimensions of self-similar measures and the absolute continuity of the convolution of two measures. As a concrete application, we obtain a very strong version of Marstrand's projection theorem for planar self-similar sets. 2010 Mathematics Subject Classification. Primary 28A78, 28A80; Secondary 37A45, 42A38. Key words and phrases. Absolute continuity, self-similar measures, Hausdorff dimension, convolutions.},
 author = {Pablo Shmerkin and Boris Solomyak},
 journal = {Transactions of the American Mathematical Society},
 number = {7},
 pages = {5125--5151},
 publisher = {American Mathematical Society},
 title = {Absolute continuity of self-similar measures, their projections and convolutions},
 urldate = {2024-10-29},
 volume = {368},
 year = {2016}
}

@article{hare/hare/rutar,
author = {Hare, Kathryn E. and Hare, Kevin G. and Rutar, Alex},
year = {2020},
month = {09},
pages = {1},
title = {When the Weak Separation Condition implies the Generalized Finite Type Condition},
volume = {149},
journal = {Proceedings of the American Mathematical Society},
doi = {10.1090/proc/15307}
}

@article{hare/prandi,
author = {Hare, Kevin E. and Prandi, Joaquin G.},
year = {},
month = {},
pages = {},
title = {Local dimensions of the addition of Measures},
volume = {},
journal = {In Progress},
doi = {}
}
\end{document}